\definecolor{citegreen}{rgb}{0,0.6,0}
\definecolor{refred}{rgb}{0.8,0,0}
\newtheorem{thm}{Theorem}[section]
\newtheorem{lem}[thm]{Lemma}
\newtheorem{prop}[thm]{Proposition}
\theoremstyle{definition}
\newtheorem{defn}[thm]{Definition}
\theoremstyle{remark}
\newtheorem{rem}[thm]{Remark}
\numberwithin{equation}{section}
\def\eps{\varepsilon}
\def\Ric{{\mathrm {Ric}}}
\def\R{\mathbb R}
\def\R{{{\mathbb R}}}
\def \ringg#1{\accentset{\circ}{#1}}
\newcommand{\intbar}{\etaathop{\int\etaakebox(-13.5,0){\rule[4pt]{.7em}{0.3pt}}
\kern-6pt}\nolimits}
\newcommand{\be}{\begin{equation}}
\newcommand{\ee}{\end{equation}}
\newcommand{\bea}{\begin{equation*}}
\newcommand{\eea}{\end{equation*}}
\begin{document}

\title{A matrix Harnack inequality for semilinear heat equations}

\author{Giacomo Ascione}
\address{Giacomo Ascione\\
Dipartimento di Matematica e Applicazioni,
Universit\`a di Napoli, Via Cintia, Monte S. Angelo 80126 Napoli,
Italy}
\email{giacomo.ascione@unina.it}

\author{Daniele Castorina}
\address{Daniele Castorina\\
Dipartimento di Matematica e Applicazioni,
Universit\`a di Napoli, Via Cintia, Monte S. Angelo 80126 Napoli,
Italy}
\email{daniele.castorina@unina.it}

\author{Giovanni Catino}
\address{Giovanni Catino\\
Dipartimento di Matematica,
Politecnico di Milano, Piazza Leonardo da Vinci 32, 20133, Milano,
Italy}
\email{giovanni.catino@polimi.it}

\author{Carlo Mantegazza}
\address{Carlo Mantegazza\\
Dipartimento di Matematica e Applicazioni,
Universit\`a di Napoli, Via Cintia, Monte S. Angelo 80126 Napoli,
Italy}
\email{c.mantegazza@sns.it}

\begin{abstract}  
We derive a matrix version of Li \& Yau--type estimates for positive solutions of semilinear heat equations on Riemannian manifolds with nonnegative sectional curvatures and parallel Ricci tensor, similarly to what R.~Hamilton did in~\cite{hamilton7} for the standard heat equation. We then apply these estimates to obtain some Harnack--type inequalities, which give local bounds on the solutions in terms of the geometric quantities involved.
\end{abstract}

\subjclass[2010]{35K05, 58J35}

\maketitle

\section{Introduction}

We are interested in positive classical solutions of semilinear heat equations $u_t = \Delta u + u^p$, in $\R^n$ or in a complete Riemannian manifold $(M,g)$ without boundary, where $p>1$.

In their celebrated paper~\cite{liyau}, Li and Yau showed how a Harnack inequality for the classical heat equation on a manifold with nonnegative Ricci tensor can be derived from a differential inequality for the logarithm of a solution. Subsequently, in the case of a manifold with nonnegative sectional curvatures and parallel Ricci tensor, Hamilton in~\cite{hamilton7} proved that the Harnack estimate of Li and Yau can actually be obtained as the trace of a full matrix inequality, under some more restrictive geometric assumptions. In some cases these inequalities can be useful in proving triviality of eternal solutions, see~\cite{cacama2}, moreover, ``geometric'' versions of them appear naturally and play a key role in the analysis of mean curvature flow and of Ricci flow (which are described by much more complicated systems of parabolic PDEs), see~\cite{hamilton12,hamilton4}. 

Our aim is to extend to the semilinear setting the matrix Harnack estimate of Li \& Yau--type for the heat equation developed by Hamilton in~\cite{hamilton7}. 

\medskip

We set some definitions and notations. In all of the paper, the Riemannian manifolds $(M,g)$ will be smooth, complete, connected and without boundary. We will denote with $\nabla$ the Levi--Civita connection of $(M,g)$ and $\Delta$ the associated Laplace--Beltrami operator and we assume that $(M,g)$ has nonnegative sectional curvatures and parallel Ricci tensor, that is, $\nabla \Ric = 0$. Finally, all the solutions we will consider are classical ($C^2$ in space and $C^1$ in time, at least).

\begin{rem}
The hypothesis that $M$ has parallel Ricci tensor and nonnegative sectional curvatures
is satisfied on a torus or a sphere or a complex projective space, or a product
of such, or a quotient of a product by a finite group of isometries.
\end{rem}

\begin{defn}\label{def:adm}
A quintuple of real numbers $(a,b,c,d,\theta)$ is {\em admissible}, if the following inequalities
\begin{equation}\label{eq:systineq}
\begin{cases}
d\geq a>c>0 \\
\theta>b \ge 0\\
(a-c)^2\theta^2-a(\theta-b)[(2\theta+na)(a-c)+a(n-1)(\theta-b)]\ge 0
\end{cases}
\end{equation}
are satisfied.
\end{defn}

The above system turns out to have actually solutions.

\begin{prop}\label{prop:exist}
There exists a nonempty cone $\mathcal{C}$ of admissible quintuples of parameters.
\end{prop}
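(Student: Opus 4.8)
The plan is to exhibit an explicit open subcone by first noting the scaling structure of the system~\eqref{eq:systineq}: if $(a,b,c,d,\theta)$ is admissible then so is $(\lambda a,\lambda b,\lambda c,\lambda d,\lambda\theta)$ for every $\lambda>0$, since all three conditions are homogeneous (the first two of degree one, the third of degree four). Hence it suffices to produce a nonempty \emph{open} set of admissible quintuples, and the cone it generates will be the desired $\mathcal{C}$; openness follows because all the inequalities in~\eqref{eq:systineq} except the strictness $a>c>0$ and $\theta>b\ge 0$ are non-strict, but we will in fact arrange the cubic-in-brackets inequality to be strict, so the whole system holds on a neighborhood.

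First I would dispose of the easy constraints: set $d=a$ (the condition $d\ge a$ is then saturated, but we may always enlarge $d$ afterwards without affecting anything else), and treat $b\ge 0$ as a small parameter to be chosen last. The heart of the matter is the third inequality, which I would analyze in the regime $b=0$ first. Setting $b=0$, the third condition becomes
\begin{equation*}
(a-c)^2\theta^2-a\theta\bigl[(2\theta+na)(a-c)+a(n-1)\theta\bigr]\ge 0,
\end{equation*}
i.e., dividing by $\theta>0$,
\begin{equation*}
(a-c)^2\theta-a(2\theta+na)(a-c)-a^2(n-1)\theta\ge 0.
\end{equation*}
Collecting in $\theta$ this is $\bigl[(a-c)^2-2a(a-c)-a^2(n-1)\bigr]\theta\ge na^2(a-c)$. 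The bracket equals $(a-c)^2-2a(a-c)-(n-1)a^2$, which as a quadratic in $(a-c)$ has roots $a\bigl(1\pm\sqrt{n}\bigr)$; hence it is positive precisely when $a-c>a(1+\sqrt n)$, i.e. when $c<-\sqrt n\,a<0$ — incompatible with $c>0$. So with $b=0$ the bracket is negative and the inequality cannot hold for any $\theta>0$. This tells me the reduction $b=0$ is too crude and that $b>0$ is essential; the term $-a(\theta-b)[\cdots]$ is genuinely smaller than $-a\theta[\cdots]$, and that gap is what must be exploited. I would therefore instead fix the ratios, e.g. normalize $a=1$, introduce $c=1-\sigma$ with $\sigma\in(0,1)$ small, write $b=\beta\theta$ with $\beta\in(0,1)$, and send $\theta\to\infty$: the third inequality becomes, to leading order in $\theta$,
\begin{equation*}
\sigma^2\theta^2-(1-\beta)\theta\bigl[(2\theta+n)\sigma+(n-1)(1-\beta)\theta\bigr]\ge 0,
\end{equation*}
whose $\theta^2$-coefficient is $\sigma^2-(1-\beta)\bigl[2\sigma+(n-1)(1-\beta)\bigr]$. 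Choosing $\beta$ close enough to $1$ makes $(1-\beta)$ as small as we like, so this coefficient is positive (it tends to $\sigma^2>0$), and for $\theta$ large the lower-order terms are dominated; thus the cubic inequality holds strictly. With $a=d=1$, $c=1-\sigma\in(0,1)$, $0\le b=\beta\theta<\theta$, and $\theta$ large, all three conditions of~\eqref{eq:systineq} are satisfied with the third one strict.

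Having produced one admissible quintuple at which the third inequality is strict (and the first two hold with $d>a$ after slightly increasing $d$, and $c>0$, $\theta>b>0$), continuity gives an open neighborhood of admissible quintuples; applying the positive scaling $\lambda\mapsto(\lambda a,\lambda b,\lambda c,\lambda d,\lambda\theta)$ to this neighborhood yields the nonempty cone $\mathcal{C}$. I expect the only real obstacle to be the bookkeeping in the cubic inequality — identifying the correct scaling regime ($b$ comparable to $\theta$, both large, and $a-c$ small) in which the leading coefficient becomes positive; once the right asymptotic ansatz is in hand, the verification is a one-line sign check on a quadratic in $\sigma$ with the small parameter $1-\beta$, and the rest (homogeneity, openness) is routine.
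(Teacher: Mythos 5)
Your argument is correct, but it follows a genuinely different route from the paper. You exploit the homogeneity of~\eqref{eq:systineq} and then produce an admissible point by a perturbative choice: $d=a=1$, $c=1-\sigma$, $b=\beta\theta$ with $\beta$ close to $1$ and $\theta$ large, so that the third inequality reduces exactly to $\bigl[\sigma^2-2(1-\beta)\sigma-(n-1)(1-\beta)^2\bigr]\theta^2-n(1-\beta)\sigma\,\theta\ge 0$, which holds (even strictly) once $1-\beta$ is small and $\theta$ is large; your preliminary observation that $b=0$ can never work is also correct and explains why the regime $b$ comparable to $\theta$ is the right one. The paper instead restricts to the family $d=a$, $\theta-b=c$, $b=kc$, normalizes $c=1$, and reduces the third inequality to the nonnegativity of the cubic $H(z,k)=-nz^3+(k^2-3n)z^2-(3n+2k)z-(n-1)$ in $z=a-c$, whose positive roots are then located via Descartes' rule and a discriminant/Vi\'ete analysis, yielding the explicit threshold $k(n)$. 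Your approach is shorter and more elementary, and (like the paper) it only needs one admissible point plus scaling to get a cone (the openness step is harmless but not required). What the paper's heavier computation buys is the explicit parametrized family $(z+1,k,1,z+1,k+1)$ with the quantities $k(n)$ and $z(k,n)$, which are reused verbatim in the proof of Proposition~\ref{prop:lower} to produce the explicit lower bound $\widetilde{G}(n)$ and again in Section~\ref{matharn} to build the subcone $\mathcal{C}'$; your construction establishes existence but does not by itself provide those quantitative data.
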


We can then state our main result.

\begin{thm}\label{main}
Suppose $(M,g)$ is a complete, $n$--dimensional Riemannian manifold without boundary, with nonnegative sectional curvatures and parallel Ricci tensor. Let $f=\log u$, where $u$ is a positive classical solution of 
\begin{equation}\label{eq:semiheat}
\partial_t u=\Delta u + u^p
\end{equation}
in $M \times (0,T)$. Then, for any $(a,b,c,d,\theta) \in \mathcal{C}$ there exists a constant $\varepsilon>0$ such that 
\begin{equation}\label{MAIN}
t \left( \theta f_{ij} + a \Delta f g_{ij} + b f_i f_j + c  |\nabla f|^2 g_{ij} + d e^{(p-1)f} g_{ij} \right) \geq -\frac{1}{\varepsilon}g_{ij},
\end{equation}
in the sense of tensors in $M\times(0,T)$, for all $1<p<1+G(a,b,c,d,\theta)$, where 
\begin{equation}\label{def:G}
G(a,b,c,d,\theta)=\min\{G_1(b,d,\theta),G_2(a,b,c,d,\theta)\},
\end{equation}
given
\begin{equation*}
G_1(b,d,\theta)=\frac{4d(\theta-b)}{\theta^2}
\end{equation*}
and $G_2(a,b,c,d,\theta)$ the positive solution of 
\begin{equation}\label{eq:square}
(d-a)\theta^2 x^2 + (d-c) \theta^2 x -4cd(\theta-b)=0.
\end{equation}
\end{thm}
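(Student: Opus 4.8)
The plan is to follow the strategy of Hamilton's tensor maximum principle argument, adapted to the semilinear evolution. First I would introduce the symmetric $2$-tensor
\[
P_{ij} = \theta f_{ij} + a \Delta f\, g_{ij} + b f_i f_j + c |\nabla f|^2 g_{ij} + d e^{(p-1)f} g_{ij},
\]
and compute the evolution equation for $t P_{ij}$ along the heat flow, using that $f = \log u$ satisfies $f_t = \Delta f + |\nabla f|^2 + e^{(p-1)f}$. This requires commuting derivatives (producing curvature terms, which is where $\nabla \Ric = 0$ and sectional curvature nonnegativity are used to discard the ``bad'' Ricci/Riemann contributions) and differentiating the nonlinear term $u^p = e^{(p-1)f}$ twice, which generates the extra terms $\nabla_i\nabla_j e^{(p-1)f} = (p-1)e^{(p-1)f}(f_{ij} + (p-1)f_i f_j)$ and $\Delta e^{(p-1)f}$. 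The outcome should be a reaction--diffusion inequality of the form $\partial_t (tP_{ij}) \ge \Delta(tP_{ij}) + 2\nabla f \cdot \nabla(tP_{ij}) + \tfrac{1}{t}(tP_{ij}) + t\,\mathcal{Q}_{ij}$, where $\mathcal{Q}_{ij}$ is a quadratic-type expression in $\nabla^2 f$, $\nabla f$, and $e^{(p-1)f}$, modulo a gradient term in $P$ that vanishes (or has a favourable sign) when $P$ touches zero.

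Next I would invoke the tensor maximum principle (Hamilton's version): it suffices to show that whenever $P_{ij} \ge 0$ and $P_{ij}V^j = 0$ for some vector $V$, the ``null-eigenvector'' evaluation of the reaction term $t\,\mathcal{Q}_{ij}V^iV^j$ together with the $\tfrac1t$ term is nonnegative, so that the solution, which starts positive near $t=0$ (since $tP_{ij} \to 0$), cannot cross zero — more precisely one runs the argument on $tP_{ij} + \tfrac1\varepsilon g_{ij}$ or localises as in Li--Yau to handle completeness. The crux is therefore an algebraic inequality: at a null eigenvector one uses the constraint $P(V,V)=0$ to eliminate, say, the $f_{VV} := V^iV^jf_{ij}$ component in terms of $\Delta f$, $|\nabla f|^2$, $(f_iV^i)^2$ and $e^{(p-1)f}$, substitutes into $\mathcal{Q}(V,V)$, and must verify that the resulting expression is $\ge 0$. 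This is precisely where the admissibility conditions \eqref{eq:systineq} on $(a,b,c,d,\theta)$ enter — the quadratic-form discriminant condition (third inequality in \eqref{eq:systineq}) should guarantee nonnegativity of the part of $\mathcal{Q}(V,V)$ not involving $p$, via a Cauchy--Schwarz / completing-the-square estimate on the Hessian (splitting $f_{ij}$ into its trace part and trace-free part and bounding $|f_{ij}|^2 \ge \tfrac1n (\Delta f)^2$ plus a contribution from the chosen direction $V$), while the bound $p < 1 + G(a,b,c,d,\theta)$ with $G = \min\{G_1, G_2\}$ is exactly what is needed so that the additional $p$-dependent terms coming from $\nabla^2 e^{(p-1)f}$ and from $e^{(p-1)f}\cdot(\text{cross terms})$ do not destroy this sign: $G_1$ controlling the pure $e^{2(p-1)f}$ coefficient (hence the factor $\tfrac{4d(\theta-b)}{\theta^2}$ looking like a discriminant condition for a quadratic in $e^{(p-1)f}$) and $G_2$, the root of \eqref{eq:square}, controlling the mixed terms.

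I would organise the computation by first recording, as a lemma, the evolution of $tP_{ij}$, then a second lemma isolating the null-eigenvector reaction term, and finally the purely algebraic verification under \eqref{eq:systineq} and $p < 1+G$; Proposition \ref{prop:exist} guarantees the hypothesis set $\mathcal{C}$ is nonempty so the theorem is non-vacuous. The main obstacle I anticipate is the algebraic step: keeping track of all the $p$-dependent contributions (there are several, since $e^{(p-1)f}$ appears both inside $P$ and is differentiated by the evolution) and massaging the null-eigenvector quadratic form into a manifestly nonnegative combination of squares whose defect is absorbed by the two constraints defining $G_1$ and $G_2$. A secondary technical point is the standard one of passing from the maximum principle on compact manifolds to complete ones: this is handled by a Li--Yau-type cutoff localisation, and since the extra nonlinear term $u^p \ge 0$ has a favourable sign and $f = \log u$ is only assumed locally bounded, the cutoff argument should go through with the usual care near the parabolic boundary of the localising cylinder.
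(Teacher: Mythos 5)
Your proposal follows essentially the same route as the paper's proof: the same tensor $F_{ij}=t\,P_{ij}$, the same evolution identity $(\partial_t-\Delta)F_{ij}=\tfrac{F_{ij}}{t}+\tfrac{2F_{ij;k}f_k}{t}+tQ_{ij}$ with curvature terms discarded by the sign assumptions, the trace/trace--free splitting of the Hessian with the third admissibility inequality of~\eqref{eq:systineq} playing the role of the discriminant (via Young/Cauchy--Schwarz), the two sign conditions in $x=p-1$ producing $G_1$ and $G_2$, and a tensor maximum principle with the standard localization to complete manifolds. The only differences are cosmetic: the paper establishes the pointwise matrix bound $\theta^2Q_{ij}\geq F_{ij}^2/(\varepsilon t^2)$ with $\varepsilon=\tfrac{1}{2(\theta-b)}$ where $F\le 0$, by completing the square in $\theta f_{ik}+a\Delta f\,g_{ik}$ and re-expressing it through $F_{ij}$ itself rather than eliminating $f_{VV}$ at a null eigenvector, and $G_1$ actually arises from the coefficient of the mixed term $e^{(p-1)f}F_{ij}/t$ (the pure $e^{2(p-1)f}g_{ij}$ term has coefficient $2d^2(\theta-b)\ge 0$ and is harmless), while $G_2$ comes from the coefficient of $e^{(p-1)f}|\nabla f|^2 g_{ij}$.
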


Taking the trace with the metric $g$ in inequality~\eqref{MAIN}, we get a scalar Li \& Yau--type inequality (which is actually weaker than the analogous one proved in~\cite{cacama2})
\begin{equation}\label{MAIN2}
t \left[(\theta +na)\Delta f + (b+nc)|\nabla f|^2 + nde^{(p-1)f}\right] \geq -\frac{n}{\varepsilon}
\end{equation}
and substituting $u=e^f$,
\begin{equation*}
(\theta/n +a)\Delta u + (b/n+c-\theta/n-a)\frac{|\nabla u|^2}{u} + d u^p\geq -\frac{u}{\varepsilon t}
\end{equation*}
in $M\times(0,T)$.

\begin{rem}\label{mainrem}
As we mentioned, inequalities like~\eqref{MAIN} and~\eqref{MAIN2} are relevant for {\em ancient} (and {\em eternal}) solutions $u$, that is, solutions defined in $M\times(-\infty,T)$ for some $T\in\R\cup\{+\infty\}$, since by a standard argument (see~\cite[Section~3]{cacama2}, for instance) choosing suitable intervals for their application, they imply
\begin{equation*}
\theta u_{ij} + a \Delta u g_{ij}+(b-\theta)\frac{u_iu_j}{u} + (c-a) \frac{|\nabla u|^2}{u} g_{ij} + d u^pg_{ij}\geq 0
\end{equation*}
in the sense of tensors and
\begin{equation*}
(\theta/n +a)\Delta u + (b/n+c-\theta/n-a)\frac{|\nabla u|^2}{u} + d u^p\geq 0
\end{equation*}
in $M\times(-\infty,T)$.
\end{rem}

It is possible to have an explicit bound for the function $G$ defined in Theorem~\ref{main} only in terms of the dimension $n$ of the manifold, as shown in the next proposition, hence giving a {\em lower} bound for the range of exponents $p$ for which the results hold.

\begin{prop}\label{prop:lower}
There holds
$$
\sup_{(a,b,c,d,\theta) \in \mathcal{C}} G(a,b,c,d,\theta) \geq \widetilde{G}(n)
$$
where
\begin{equation*}
\widetilde{G}(n) = \frac{4}{(k(n)+1)^2}\Bigl(1+\frac{1}{z(n)}\Bigr),
\end{equation*}
and 
\begin{align*}
k(n)=&\,3\sqrt{n}\cos\Bigl(\frac{1}{3}\arccos\bigl(1/\sqrt{n}\,\bigr)\Bigr),\\
z(n)=&\,\frac{k^2(n)-3n+\sqrt{k^4(n)-6nk^2(n)-6nk(n)}}{3n}.
\end{align*}
In particular, for any $c>0$
$$
\bigl((z(n)+1)c,k(n)c,c,(z(n)+1)c,(k(n)+1)c\bigr) \in \mathcal{C}
$$
and
\begin{equation*}
\widetilde{G}(n)=G((z(n)+1)c,k(n)c,c,(z(n)+1)c,(k(n)+1)c).
\end{equation*}
\end{prop}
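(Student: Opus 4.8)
The plan is to \emph{verify} directly that, for any $c>0$, the explicit quintuple $q=\bigl((z(n)+1)c,\,k(n)c,\,c,\,(z(n)+1)c,\,(k(n)+1)c\bigr)$ lies in $\mathcal{C}$ and satisfies $G(q)=\widetilde G(n)$; since $G$ is invariant under the scaling $(a,b,c,d,\theta)\mapsto\lambda(a,b,c,d,\theta)$ (both $G_1$ and, via multiplying~\eqref{eq:square} by $\lambda^3$, its root $G_2$ are unchanged), the value is independent of $c$, and the asserted lower bound on the supremum follows at once because $q\in\mathcal{C}$. Write $k=k(n)$, $z=z(n)$. I would first record two elementary facts. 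By the triple--angle identity $\cos 3\alpha=4\cos^3\alpha-3\cos\alpha$ applied with $\alpha=\tfrac13\arccos(1/\sqrt n)$, the number $k$ solves $4k^3-27nk-27n=0$, i.e.\ $k^3=\tfrac{27n}{4}(k+1)$, and $k>0$ is clear from the trigonometric formula. Hence $k^3-6nk-6n=\tfrac{3n}{4}(k+1)>0$, so the square root in the definition of $z$ is real; squaring $3nz-(k^2-3n)=\sqrt{k^4-6nk^2-6nk}$ shows $z$ solves $3nz^2-2(k^2-3n)z+(3n+2k)=0$, and substituting $k^3=\tfrac{27n}{4}(k+1)$ one checks that the larger root is $z=\tfrac{2k+3}{k}=2+\tfrac3k$; in particular $z>2>1$. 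I would carry $z=(2k+3)/k$ from here on.

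Next I would check the three inequalities of Definition~\ref{def:adm} for $q$. Since $a=d=(z+1)c$ we have $d\ge a$ and $a>c>0$ (because $z>0$); since $\theta-b=(k+1)c-kc=c$ we have $\theta>b$ and $b=kc\ge 0$. The substantive one is the third: substituting $a=d=(z+1)c$, $b=kc$, $\theta=(k+1)c$ (so $a-c=zc$, $\theta-b=c$) and dividing by $c^4>0$, it reduces to
\[
-nz^3+(k^2-3n)z^2-(2k+3n)z-(n-1)\ \ge\ 0 .
\]
Plugging in $z=(2k+3)/k$, clearing the denominator $k^3$, and reducing the resulting polynomial in $k$ modulo $4k^3-27nk-27n=0$, the left--hand side collapses identically to $0$; thus $q$ lies on the boundary of $\mathcal{C}$ and is admissible.

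Finally I would compute $G(q)$. Because $a=d$, equation~\eqref{eq:square} degenerates to the linear equation $(d-c)\theta^2x-4cd(\theta-b)=0$, whose positive solution is $G_2=\dfrac{4cd(\theta-b)}{(d-c)\theta^2}=\dfrac{4(z+1)}{z(k+1)^2}=\dfrac{4}{(k+1)^2}\bigl(1+\tfrac1z\bigr)$, using $d-c=zc$ and $\theta-b=c$; while $G_1=\dfrac{4d(\theta-b)}{\theta^2}=\dfrac{4(z+1)}{(k+1)^2}$. Since $z>1$ we get $G_2=G_1/z<G_1$, hence $G(q)=\min\{G_1,G_2\}=G_2=\widetilde G(n)$, and therefore $\sup_{(a,b,c,d,\theta)\in\mathcal{C}}G\ge G(q)=\widetilde G(n)$.

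The only genuinely delicate point is the polynomial bookkeeping in the second step — showing the cubic--in--$z$ expression vanishes modulo $4k^3-27nk-27n=0$; it is entirely elementary but must be done with care, and it is precisely this identity that pins down the choices $a=d$, $\theta=b+c$, $z=2+3/k$ together with the cubic for $k$. These choices can be motivated by observing, via implicit differentiation of~\eqref{eq:square}, that the positive root $G_2$ is increasing in $a$, so the constraint $a\le d$ is active at any maximizer, and then optimizing the remaining parameters with the third admissibility inequality active produces the cubic for $k$; but the proof itself needs no such heuristic, being a pure verification.
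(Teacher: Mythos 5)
Your proposal is correct, but it takes a genuinely different route from the paper. The paper constructs the quintuple through an analysis of the one--parameter family $(z(k,n)+1,k,1,z(k,n)+1,k+1)$ for all $k\geq k(n)$: it identifies $z(k,n)$ as the positive local maximum point of $H(\cdot,k)$ (introducing the auxiliary threshold $k_0(n)$), proves $H(z(k,n),k)\geq 0$ and $z(k,n)\geq 2$ (the latter by a separate argument involving $k_1(n)=\frac{1+\sqrt{1+108n}}{4}$ and the monotonicity of $k\mapsto H(z(k,n),k)$), computes $G$ along the family, and finally shows $k\mapsto z(k,n)$ is increasing so that $G$ is maximal at $k=k(n)$. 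You instead verify everything at the single endpoint $k=k(n)$: from the trigonometric formula you extract the cubic identity $4k^3=27n(k+1)$ (i.e.\ $P_1(k(n))=0$ in the paper's notation), deduce the closed form $z(n)=2+3/k(n)$, and reduce both the third inequality of~\eqref{eq:systineq} (which becomes $H(z(n),k(n))=0$, so the quintuple lies on the boundary of $\mathcal{C}$) and the bound $z(n)>2$ to polynomial identities modulo that cubic; scale invariance of~\eqref{eq:systineq}, of $G_1$ and of the roots of~\eqref{eq:square} then handles general $c>0$, exactly as the paper's normalization $c=1$ does. Your computations check out: indeed $k^3-6nk-6n=\tfrac{3n}{4}(k+1)$, the value $(2k+3)/k$ solves $3nz^2-2(k^2-3n)z+(3n+2k)=0$ modulo the cubic, and $k^3H\bigl((2k+3)/k,\,k\bigr)=4k^5+8k^4+(4-27n)k^3-81nk^2-81nk-27n$ reduces to $0$. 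The one point you leave implicit is that $(2k+3)/k$ is the \emph{larger} root, i.e.\ coincides with $z(n)$ rather than the companion root; this is a one--line check, since $(2k+3)/k\geq\frac{k^2-3n}{3n}$ is equivalent, via $k^3=\tfrac{27n}{4}(k+1)$, to $9\geq\tfrac{27}{4}$, so it is not a real gap. Your argument is shorter and yields the exact value $z(n)=2+3/k(n)$, which the paper never makes explicit; what it gives up is the structural information about the whole family $k\geq k(n)$ (admissibility for every such $k$, monotonicity, optimality of $k=k(n)$ within the family), which the paper relies on again later, e.g.\ in constructing the cone $\mathcal{C}'$ in Section~\ref{matharn}.
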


In the next sections we show these results, while in the last one we derive, along the lines of~\cite{liyau} (and~\cite{hamilton7}), some consequent Harnack--type local estimates for the solutions.

\section{Proof of Theorem~\ref{main}}\label{pfmain}

We follow the line of Hamilton in~\cite{hamilton7} and, by simplicity and clarity, we show the proof when $(M,g)$ is $\R^n$ with its canonical metric. In the general case of a complete $n$--dimensional Riemannian manifold, the extra curvature terms which appear in the computations because of the operations of interchanging covariant derivatives ``have the right sign'' in the final inequality, since we assumed nonnegative sectional curvatures and parallel Ricci tensor. Furthermore, thanks to standard localization arguments (as explicitly shown in~\cite{cacama2}, see also~\cite{hamilton12}), in applying the maximum principle -- on which the proof is based -- we can argue as if we were in a compact case (all the maximum/minimum points there exist).

Let $u:M\times[0,T)\to\R$ be a positive solution of $u_t = \Delta u + u^p$. Setting 
$f=\log u$, we have
$$
\vert\nabla f\vert=\frac{\vert\nabla u\vert}{u}\qquad\qquad \Delta f=\frac{\Delta u}{u}-\frac{\vert\nabla u\vert^2}{u^2}=\frac{\Delta u}{u}-\vert\nabla f\vert^2
$$
$$
f_t=\frac{u_t}{u}=\frac{\Delta u}{u}+u^{p-1}=\Delta f+\vert\nabla f\vert^2+e^{f(p-1)}.
$$
Moreover, by equation~\eqref{eq:semiheat}, we also have the following relations:
\begin{align*}
(\partial_t - \Delta) f &= |\nabla f|^2 + e^{(p-1)f}\\
(\partial_t - \Delta) f_i &= 2 f_{ik} f_k + (p-1) e^{(p-1)f} f_i\\
(\partial_t - \Delta) (f_i f_j) &= 2 f_{ik} f_k f_j + 2 f_{jk} f_k f_i + 2 (p-1) e^{(p-1)f} f_i f_j - 2 f_{ik} f_{jk}\\
(\partial_t - \Delta) |\nabla f|^2 &= 4 f_{lk} f_l f_k + 2 (p-1) e^{(p-1)f} |\nabla f|^2 - 2 |\nabla^2 f|^2\\
(\partial_t - \Delta) f_{ij} &= 2 f_{ikj} f_k + 2 f_{ik} f_{jk} + (p-1)^2 e^{(p-1)f} f_i f_j + (p-1)e^{(p-1)f} f_{ij}\\
(\partial_t - \Delta) \Delta f &= 2 (\Delta f)_k f_k + 2 |\nabla^2 f|^2 + (p-1)^2 e^{(p-1)f} |\nabla f|^2 + (p-1) e^{(p-1)f} \Delta f\\
(\partial_t - \Delta) e^{(p-1)f} &= 2(p-1) e^{(p-1)f} |\nabla f|^2 - p(p-1) e^{(p-1)f} |\nabla f|^2 + (p-1) e^{2(p-1)f}.
\end{align*}
Let $(a,b,c,d,\theta)\in \mathcal{C}$ and define the symmetric two--tensor
\begin{equation}
F_{ij}=\,t \left( \theta f_{ij} + a \Delta f g_{ij} + b f_i f_j + c  |\nabla f|^2 g_{ij} + d e^{(p-1)f} g_{ij} \right).
\end{equation}
Observe that
\begin{equation*}
\frac{F_{ij; k} f_k}{t} = \theta f_{ijk} f_k + a (\Delta f)_k f_k g_{ij} + b (f_{ik} f_k f_j +  f_{jk} f_k f_i) + 2 c f_{lk} f_l f_k g_{ij} + d (p-1) e^{(p-1)f} |\nabla f|^2 g_{ij}
\end{equation*}
and, by a simple computation,
\begin{equation}\label{eq-1}
(\partial_t - \Delta) F_{ij} =  \frac{F_{ij}}{t} +  2 \frac{F_{ij; k} f_k}{t} + t Q_{ij},
\end{equation}
where
\begin{align}\label{eq:Qijdef}
\begin{split} 
Q_{ij} &= (p-1) e^{(p-1)f} \frac{F_{ij}}{t} + (p-1) \left[ b + (p-1) \theta \right] e^{(p-1)f} f_i f_j\\
&\quad+(p-1)\left[c+a(p-1)-dp\right]e^{(p-1)f}|\nabla f|^2 g_{ij}\\
&\quad+2(\theta-b)f_{ik}f_{jk}+2(a-c)|\nabla^2 f|^2 g_{ij}.
\end{split}
\end{align}
Define $\ringg{f}_{ij}:=f_{ij}-(\Delta f /n) g_{ij}$ to be the tracefree part of the Hessian of $f$. Since
$$
|\ringg{f}_{ij}|^2=|\nabla^2 f|^2-\frac1n (\Delta f)^2,
$$
we have 
\begin{align}\label{eq:ineqint1}
\begin{split}
2(\theta-b)& f_{ik}f_{jk}+2(a-c)|\nabla^2 f|^2 g_{ij}\\
&= \frac{2(\theta-b)}{\theta^2}\left(\theta f_{ik}+a \Delta f g_{ik}\right)\left(\theta f_{jk}+a \Delta f g_{jk}\right)+2(a-c)|\nabla^2 f|^2 g_{ij}\\
&\quad-\frac{4a(\theta-b)}{\theta}\Delta f f_{ij}-\frac{2a^2(\theta-b)}{\theta^2}(\Delta f)^2 g_{ij}\\
&= \frac{2(\theta-b)}{\theta^2}\left(\theta f_{ik}+a \Delta f g_{ik}\right)\left(\theta f_{jk}+a \Delta f g_{jk}\right)+2(a-c)|\ringg{f}_{ij}|^2 g_{ij}\\
&\quad-\frac{4a(\theta-b)}{\theta}\Delta f \ringg{f}_{ij}-\frac{2}{n\theta^2}\left[a(\theta-b)(2\theta+na)-(a-c)\theta^2\right](\Delta f)^2 g_{ij}\\
&= \frac{2(\theta-b)}{\theta^2}\Bigl(\frac{F_{ik}}{t}-b f_i f_k - c  |\nabla f|^2 g_{ik} - d e^{(p-1)f} g_{ik}\Bigr)\\
&\hspace{1.85cm}\cdot\Big(\frac{F_{jk}}{t}-b f_j f_k - c  |\nabla f|^2 g_{jk} - d e^{(p-1)f} g_{jk}\Big)+2(a-c)|\ringg{f}_{ij}|^2 g_{ij}\\
&\quad-\frac{4a(\theta-b)}{\theta}\Delta f \ringg{f}_{ij}+\frac{2}{n\theta^2}\left[(a-c)\theta^2-a(\theta-b)(2\theta+na)\right](\Delta f)^2 g_{ij}.
\end{split}
\end{align}
In order to estimate the last three terms, we use the algebraic inequality for traceless symmetric two--tensor
$$
\ringg{f}_{ij}\leq \ringg{\rho} g_{ij} \leq \sqrt{\frac{n-1}{n}} |\ringg{f}_{ij}|g_{ij},
$$
where $\ringg{\rho}=\max\{|\lambda_i|: \ \lambda_i \text{ eigenvalue of } \ringg{f}_{i,j}\}$ denotes the spectral radius of $\ringg{f}_{ij}$. By Young's inequality, for every $K>0$, one has
\begin{equation}\label{eq:ineqint2}
2 \Delta f \ringg{f}_{ij} \leq 2\sqrt{\frac{n-1}{n}} \Delta f|\ringg{f}_{ij}|g_{ij} \leq \sqrt{\frac{n-1}{n}}\Big[K(\Delta f)^2+\frac{1}{K}|\ringg{f}_{ij}|^2\Big]g_{ij}. 
\end{equation}
We set
$$
K := \sqrt{\frac{n-1}{n}} \frac{a(\theta-b)}{\theta(a-c)},
$$
where $K>0$, by the first two inequalities of~\eqref{eq:systineq} since $(a,b,c,d,\theta)\in \mathcal{C}$. Using inequality~\eqref{eq:ineqint2} to estimate the last three terms of equation~\eqref{eq:ineqint1} we achieve
\begin{align*}
2(a-c)&|\ringg{f}_{ij}|^2 g_{ij}-\frac{4a(\theta-b)}{\theta}\Delta f \ringg{f}_{ij}+\frac{2}{n\theta^2}\left[(a-c)\theta^2-a(\theta-b)(2\theta+na)\right](\Delta f)^2 g_{ij}\\
&\geq \frac{2}{n\theta^2(a-c)}\left\{(a-c)^2\theta^2-a(\theta-b)\left[(2\theta+na)(a-c)\right.\right.\\&\left.\left.\qquad +a(n-1)(\theta-b)\right]\right\}(\Delta f)^2 g_{ij} \geq 0,
\end{align*}
where the last inequality follows from the third inequality in~\eqref{eq:systineq} and since $(a,b,c,d,\theta)\in \mathcal{C}$. The computation above yields
\begin{align}
\begin{split}
2&(\theta-b)f_{ik}f_{jk}+2(a-c)|\nabla^2 f|^2 g_{ij}\\
&\geq \frac{2(\theta-b)}{\theta^2}\Bigl(\frac{F_{ik}}{t}-b f_i f_k - c  |\nabla f|^2 g_{ik} - d e^{(p-1)f} g_{ik}\Bigr)\\
&\qquad \qquad \quad 
\cdot \Bigl(\frac{F_{jk}}{t}-b f_j f_k - c  |\nabla f|^2 g_{jk} - d e^{(p-1)f} g_{jk}\Bigr)\\
&= \frac{2(\theta-b)}{\theta^2} \Big[\frac{F^2_{ij}}{t^2}-\frac{b}{t}(F_{ij}f_jf_k+F_{jk}f_if_k)-\frac{2c}{t}|\nabla f|^2F_{ij}\\
&\hspace{2.3cm} -\frac{2d}{t}e^{(p-1)f}F_{ij}+b(b+2c)|\nabla f|^2f_if_j \\
&\hspace{2.3cm}+2bd\, e^{(p-1)f}f_if_j+c^2|\nabla f|^4g_{ij}\\
&\hspace{2.3cm}+2cd\,e^{(p-1)f}|\nabla f|^2g_{ij}+d^2e^{2(p-1)f}g_{ij}\Big].\label{eq:est}
\end{split}
\end{align}
Now we claim that there exists $\varepsilon>0$ such that if $F_{ij}\le 0$ then
\begin{equation}\label{eq:claim}
\theta^2 Q_{ij} \geq \frac{F^2_{ij}}{\eps t^2},
\end{equation}
where $F^2_{ij} = F_{ik} \, F_{kj}$.
Thus, let us suppose that $F_{ij}\le 0$. Combining the estimate~\eqref{eq:est} with the definition of $Q_{ij}$ given in~\eqref{eq:Qijdef} we get
\begin{align*} 
	\theta^2 Q_{ij} &\geq (p-1)\theta^2 e^{(p-1)f} \frac{F_{ij}}{t} + (p-1)\theta^2 \left[ b + (p-1) \theta \right] e^{(p-1)f} f_i f_j\\
	&\quad+(p-1)\theta^2\left[c+a(p-1)-dp\right]e^{(p-1)f}|\nabla f|^2 g_{ij}\\
	&\quad+2(\theta-b)\Big[\frac{F^2_{ij}}{t^2}-\frac{b}{t}(F_{ij}f_jf_k+F_{jk}f_if_k)-\frac{2c}{t}|\nabla f|^2F_{ij}\\
	&\hspace{2.7cm}-\frac{2d}{t}e^{(p-1)f}F_{ij}+b(b+2c)|\nabla f|^2f_if_j \\
	&\hspace{2.7cm}+2bd\, e^{(p-1)f}f_if_j+c^2|\nabla f|^4g_{ij}\\
	&\hspace{2.7cm}+2cd\,e^{(p-1)f}|\nabla f|^2g_{ij}+d^2e^{2(p-1)f}g_{ij}\Big]\\
	&\geq 2(\theta-b)\frac{F^2_{ij}}{t^2}+\left[(p-1)\theta^2-4d(\theta-b)\right]e^{(p-1)f} \frac{F_{ij}}{t}\\
	&\quad+\left\{(p-1)\theta^2\left[c+a(p-1)-dp\right]+4cd(\theta-b) \right\}e^{(p-1)f}|\nabla f|^2 g_{ij}\\
	&\quad +2d^2(\theta-b)e^{2(p-1)f}g_{ij}.
\end{align*}
Recalling that for $(a,b,c,d,\theta)\in \mathcal{C}$ there holds $(\theta-b)>0$, we only have to show that 
\begin{equation*}
\begin{cases}
(p-1)\theta^2-4d(\theta-b) \le 0\\
(p-1)\theta^2\left[c+a(p-1)-dp\right]+4cd(\theta-b) \ge 0
\end{cases}
\end{equation*}
Setting $x=p-1 \ge 0$, we can recast the previous inequalities as
\begin{equation}\label{eq:systemx}
\begin{cases}
x\theta^2-4d(\theta-b)\le  0\\
(d-a)\theta^2 x^2+(d-c)\theta^2x-4cd(\theta-b)\le 0
\end{cases}
\end{equation}
Being $4cd(\theta-b)>0$, $(d-a)\theta^2>0$ and $(d-c)\theta^2>0$, equation~\eqref{eq:square} admits two solutions, of which only one is positive. Hence, defining $G_1(b,d,\theta)$ and $G_2(a,b,c,d,\theta)$ as in the statement of the Theorem and $G(a,b,c,d,\theta)$ as in~\eqref{def:G}, the fact that $0 \le x \le G(a,b,c,d,\theta)$ implies that $x$ satisfies both the inequalities in~\eqref{eq:systemx}. Thus, equation~\eqref{eq:claim} is satisfied with $\varepsilon=\frac{1}{2(\theta-b)}$.\\
The application of maximum principle (see for example Theorem C.1.3 in~\cite{Manlib} or Lemma 8.2  in~\cite{hamilton2}) concludes the proof.
\qed

\section{Proof of Proposition~\ref{prop:exist}}\label{pfexist}

First of all, observe that if $(a,b,c,d,\theta)$ is admissible and $\lambda>0$ is a constant, then $(\lambda a, \lambda b, \lambda c, \lambda d, \lambda \theta)$ is still admissible. Thus, if we show that an admissible quintuple of parameters exists, then we have a cone of admissible parameters. Let us consider a quintuple of the form $(a,b,c,a,\theta)$ where $a-c>0$ and $\theta-b>0$. We want to find $a,b,c,\theta$ such that the third inequality of~\eqref{eq:systineq} is satisfied. To do this, let us set $a-c=\delta$ for some $\delta>0$. Then, the third inequality of~\eqref{eq:systineq} becomes
\begin{align*}
	\delta^2\theta^2&-\delta(\theta-b)[(2\theta+n\delta+nc)\delta+\delta(n-1)(\theta-b)+c(n-1)(\theta-b)]\\
	&-c(\theta-b)[(2\theta+n\delta+nc)\delta+\delta(n-1)(\theta-b)+c(n-1)(\theta-b)]\ge 0.
\end{align*}
Now let us assume that $\theta-b=c$ and $b=kc$ for some $k>0$ to achieve
\begin{equation*}
	-n\delta^3c+(k^2-3n)\delta^2c^2-(3n+2k)\delta c^3-c^4(n-1)\ge 0.
\end{equation*}
Being $c>0$, we can set $z=\frac{\delta}{c}$ and recast the previous inequality as
\begin{equation*}
	-nz^3+(k^2-3n)z^2-(3n+2k)z-(n-1)\ge 0.
\end{equation*}
Let us set
\begin{equation*}
	H(z,k):=-nz^3+(k^2-3n)z^2-(3n+2k)z-(n-1).
\end{equation*}
We want to find some suitable values for $z$ and $k$ such that $H(z,k)\ge 0$. First of all, let us determine some necessary conditions on $k$ in such a way that $H(\cdot,k)$ admits positive roots.
\begin{prop}
	There always exists a negative number $z_-<0$ such that $H(z_-,k)=0$. If there exist two positive numbers $z_1,z_2>0$ such that $H(z_i,k)=0$, then $k > \sqrt{3n}$.
\end{prop}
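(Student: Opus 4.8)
The plan is to establish the two assertions separately, both by elementary analysis of the cubic $z\mapsto H(z,k)$; throughout I use the standing convention $n\ge 2$ (so that $H(0,k)=-(n-1)<0$), which is in fact necessary, since for $n=1$, $k=1$ one computes $H(z,1)=-z(z^2+2z+5)$, whose only real zero is $z=0$.

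For the first assertion I would simply invoke the intermediate value theorem. The leading term $-nz^3$ (with $n>0$) dominates as $z\to-\infty$, so $H(z,k)\to+\infty$; pick $z_0<0$ with $|z_0|$ large enough that $H(z_0,k)>0$, and since $H(0,k)=-(n-1)<0$, continuity of the polynomial $H(\cdot,k)$ on $[z_0,0]$ yields a zero $z_-\in(z_0,0)\subset(-\infty,0)$. (The same conclusion follows from Descartes' rule of signs applied to $H(-z,k)=nz^3+(k^2-3n)z^2+(3n+2k)z-(n-1)$, which always exhibits an odd number of positive roots.)

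For the second assertion I would argue by contraposition: assume $k\le\sqrt{3n}$, i.e. $k^2-3n\le 0$, and show that then $H(\cdot,k)$ has no zero in $[0,+\infty)$ at all, which a fortiori excludes two positive roots. The key point is the sign of
\[
\partial_z H(z,k)=-3nz^2+2(k^2-3n)z-(3n+2k).
\]
For $z\ge 0$ the first two summands are $\le 0$ (here the hypothesis $k^2-3n\le 0$ enters) and the last is strictly negative because $n\ge 1$ and $k>0$; hence $\partial_z H(z,k)<0$ for all $z\ge 0$, so $z\mapsto H(z,k)$ is strictly decreasing on $[0,+\infty)$. Together with $H(0,k)=-(n-1)<0$ this forces $H(z,k)<0$ for every $z\ge 0$, so there are no nonnegative zeros. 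Consequently, if $z_1,z_2>0$ satisfy $H(z_i,k)=0$, then necessarily $k^2-3n>0$, i.e. $k>\sqrt{3n}$ (recalling $k>0$).

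I do not anticipate a real obstacle: the argument is short once one notices the monotonicity of $H(\cdot,k)$ on the positive half-line in the regime $k^2\le 3n$. The only items needing a little care are the role of the convention $n\ge 2$ (already essential for $H(0,k)<0$, hence for the first assertion), and the remark that the derivative argument does not distinguish simple from double roots, so the statement as phrased—allowing $z_1=z_2$—is covered. One could instead extract the sign information from Vieta's formulas, noting $r_1r_2r_3=-(n-1)/n<0$ and $r_1+r_2+r_3=(k^2-3n)/n$, but converting this into the desired strict inequality still needs the same shape information, so the direct derivative computation is the cleaner route.
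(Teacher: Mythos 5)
Your proof is correct. For the existence of a negative root you use exactly the paper's argument: $H(0,k)=-(n-1)<0$ together with $H(z,k)\to+\infty$ as $z\to-\infty$ and the intermediate value theorem. For the second assertion the paper instead invokes Descartes' rule of signs on the coefficients $-n$, $k^2-3n$, $-(3n+2k)$, $-(n-1)$: if $k^2-3n\le 0$ there are no sign changes, hence no positive roots, while if $k^2-3n>0$ there are two sign changes, hence at most two positive roots. You prove the same contrapositive by a calculus argument: when $k^2\le 3n$ and $z\ge 0$ every term of $\partial_z H(z,k)=-3nz^2+2(k^2-3n)z-(3n+2k)$ is nonpositive and the constant term is strictly negative, so $H(\cdot,k)$ is strictly decreasing on $[0,+\infty)$ and stays below $H(0,k)<0$. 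This is marginally stronger (it shows $H<0$ on all of $[0,+\infty)$, so double roots are automatically covered) and it makes explicit the roles of $n\ge 2$ and $k>0$, including your correct observation that the first assertion genuinely fails for $n=1$. What the Descartes route buys is brevity and the additional count ``at most two positive roots when $k^2>3n$,'' which the paper reuses in the subsequent propositions; your monotonicity argument yields only the nonexistence statement in the regime $k^2\le 3n$, which is all that this proposition requires.
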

\begin{proof}
	Let us observe that $H(0,k)=-(n-1)<0$ and $\lim_{z \to -\infty}H(z,k)=+\infty$, hence we obviously have $z_-$. Moreover, let us observe that, by Descartes' rule of the signs, the maximum number of positive solutions of the equation $H(z,k)=0$ for fixed $k$ is $0$ if $k^2-3n\le 0$ and $2$ if $k^2-3n>0$, thus, if two positive solutions $z_1,z_2$ exist, then $k>\sqrt{3n}$.
\end{proof}
\noindent We can use the previous necessary condition to achieve a necessary and sufficient condition on the existence of two positive roots
\begin{prop}
	Fix $n \ge 2$. Then $H(z,k)$ admits two (possibly equal) positive roots if and only if $k \ge k(n)$, where
	\begin{equation*}
		k(n)=3\sqrt{n}\cos\Bigl(\frac{1}{3}\arccos\left({1}/{\sqrt{n}}\,\right)\Bigr).
	\end{equation*}
\end{prop}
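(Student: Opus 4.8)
The plan is to analyze the cubic $H(\cdot,k)$ directly via its critical points and reduce the existence of two positive roots to an explicit condition on $k$, which will then be solved using the trigonometric (Vi\`ete) formula for the roots of a depressed cubic. First I would observe that $H(z,k) = -nz^3 + (k^2-3n)z^2 - (3n+2k)z - (n-1)$ has $H(0,k) = -(n-1) < 0$ and $H(z,k)\to -\infty$ as $z\to+\infty$; hence $H(\cdot,k)$ admits two positive roots (counted with multiplicity) if and only if it has a positive local \emph{maximum}, i.e. there is a point $z_0 > 0$ with $H'(z_0,k) = 0$, $H(z_0,k) \ge 0$, and $H$ is decreasing just past $z_0$. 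Since by the previous proposition we may already assume $k > \sqrt{3n}$, the derivative $H'(z,k) = -3nz^2 + 2(k^2-3n)z - (3n+2k)$ is a downward parabola whose product of roots is $(3n+2k)/(3n) > 0$ and whose sum of roots is $2(k^2-3n)/(3n)$; so for $k > \sqrt{3n}$ either both critical points are positive or both are complex. The relevant larger critical point $z_0$ is the location of the local max, and the condition becomes $H(z_0(k),k) \ge 0$.

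The key step is to make this condition explicit. Rather than substituting the messy expression for $z_0$, I would pass to the equivalent statement about the discriminant of the cubic, or — following the cleaner route — exploit the structure that appears to have been built into $k(n)$. Notice that $k(n) = 3\sqrt n\cos\bigl(\tfrac13\arccos(1/\sqrt n)\bigr)$ is exactly the largest root $y$ of the depressed-type cubic relation obtained from Vi\`ete's trigonometric identity $4\cos^3\alpha - 3\cos\alpha = \cos 3\alpha$ with $\cos 3\alpha = 1/\sqrt n$: writing $y = 3\sqrt n\cos\alpha$ one gets $4(y/(3\sqrt n))^3 - 3(y/(3\sqrt n)) = 1/\sqrt n$, i.e. $\tfrac{4y^3}{27n} - y = \sqrt n$, equivalently
\begin{equation*}
4y^3 - 27ny - 27n\sqrt n = 0, \qquad\text{or}\qquad \tfrac{4}{27}y^3 = n(y + \sqrt n)\cdot\tfrac{27}{4n}\ \text{---}
\end{equation*}
in any case a cubic in $y$ with discriminant sign forcing three real roots (since $\cos 3\alpha = 1/\sqrt n \le 1$ for $n\ge 1$) and $k(n)$ the largest. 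So the real content is: show that the threshold value of $k$ at which $H(\cdot,k)$ acquires a double positive root is precisely this $k(n)$. I would do this by eliminating $z$ between $H(z,k) = 0$ and $H'(z,k) = 0$ (computing the $z$-resultant, which is the discriminant of $H$ in $z$ up to a factor), obtaining a polynomial condition $\Phi(k,n) \ge 0$; then verify that $\Phi(k(n),n) = 0$ and that $\partial_k H(z_0,k) > 0$ at the critical point, so that $H(z_0(k),k)$ is increasing in $k$ and the two-positive-root region is exactly $\{k \ge k(n)\}$.

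For the ``if'' direction I would argue monotonicity: at $k = k(n)$ there is a (single) double positive root $z = z(n)$ and $H(z_0,k) = 0$; since increasing $k$ strictly increases $-2k\cdot z$... wait, more carefully, $\partial_k H(z,k) = 2kz^2 - 2z = 2z(kz - 1)$, which is positive at $z = z_0$ provided $k z_0 > 1$ — this needs to be checked at the threshold, and I expect it to hold because $z_0$ is bounded below by a constant while $k \ge \sqrt{3n}$ grows; granting it, $k\mapsto H(z_0(k),k)$ is increasing (the envelope theorem kills the $\partial_z$ contribution since $H'(z_0,k)=0$), so $k\ge k(n)$ gives $H(z_0,k)\ge 0$, hence two positive roots. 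For ``only if'', if $k < k(n)$ then $H(z_0,k) < 0$ at the (larger) critical point and $H < 0$ everywhere on $(0,\infty)$ except possibly below the smaller critical point, where $H$ is still negative since $H(0,k) < 0$ and $H$ is increasing there only up to a value $< 0$ — so no positive roots at all. The main obstacle I anticipate is the bookkeeping in the resultant/discriminant elimination and in confirming the sign $kz_0 > 1$ at the threshold $k = k(n)$; both are finite explicit computations, but they are where an error could hide, so I would cross-check $\Phi(k(n),n) = 0$ by plugging the Vi\`ete cubic for $k(n)$ back in symbolically rather than numerically.
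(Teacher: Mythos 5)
Your outline is workable and its computational core is the same as the paper's: eliminating $z$ between $H=0$ and $\partial_z H=0$ is precisely the discriminant computation the paper carries out, $\Delta_H(k)=4(1+k)^3n\bigl(k^3-\tfrac{27}{4}nk-\tfrac{27}{4}n\bigr)$, and identifying $k(n)$ as the unique positive root of the cubic factor is the same Vi\`ete trigonometric step. Where you genuinely differ is in converting the discriminant sign into a statement about \emph{positive} roots: you go through the critical points, the condition $H(z_0(k),k)\ge 0$ at the local maximum, and monotonicity of $k\mapsto H(z_0(k),k)$ via $\partial_k H(z,k)=2z(kz-1)$; the paper instead applies Descartes' rule to $H(-z,k)$ to see that for $k>\sqrt{3n}$ there is exactly one negative root, so ``three real roots'' is already equivalent to ``two positive roots'' and no monotonicity argument is needed. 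Your route buys a more geometric picture (and the envelope computation reappears in the paper's later proof that $z(k,n)\ge 2$), but it leaves two deferred checks where the content sits: the inequality $kz_0(k)\ge 1$ must hold on the whole interval $[k,k(n)]$, not just at the threshold (it does: $k\sqrt{k^4-6nk^2-6nk}\ge -\Delta_z(k)/(4k)-3n(1+k)$ is automatic once $\Delta_z(k)\ge 0$, since the right side is negative), and both routes tacitly need $k(n)>\sqrt{3n}$, a one-line check from $P_1(\sqrt{3n})<0$. Also note an algebra slip: from $4\cos^3\alpha-3\cos\alpha=1/\sqrt{n}$ with $k=3\sqrt{n}\cos\alpha$ one gets $4k^3-27nk-27n=0$, i.e.\ the paper's $P_1(k)=k^3-\tfrac{27}{4}nk-\tfrac{27}{4}n$, not $4k^3-27nk-27n\sqrt{n}=0$; with your version the verification $\Phi(k(n),n)=0$ would fail, though the symbolic cross-check you propose would catch it.
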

\begin{proof}
	As we have shown before, a necessary condition to have two positive roots is that $k > \sqrt{3n}$. On the other hand, let us observe that
	\begin{equation*}
		H(-z,k)=nz^3+(k^2-3n)z^2+(3n+2k)z-(n-1)
	\end{equation*}
	hence, by Descartes' rule of the signs we know that $H(\cdot,z)$ admits at most one negative root. Moreover, since $H(0,k)=-(n-1)<0$ and $\lim_{z \to -\infty}H(z,k)=+\infty$, then we know that $H(z,k)$ admits exactly one negative root for any $k>\sqrt{3n}$. Thus, if $H(z,k)$ admits three real roots, two of them have to be positive.\\
	The discriminant of $H(\cdot,k)$ is given by
	\begin{align*}
		\Delta_H(k)&=-18n(n-1)(k^2-3n)(3n+2k)+4(n-1)(k^2-3n)^3\\
		&\quad+(k^2-3n)^2(3n+2k)^2-4n(3n+2k)^3-27n^2(n-1)^2\\
		&=4(1+k)^3n\Big(k^3-\frac{27}{4}kn-\frac{27}{4}n\Big).
	\end{align*}
	We have that $\Delta_H(k)\ge 0$ if and only if $P_1(k):=k^3-\frac{27}{4}kn-\frac{27}{4}n\ge 0$. $P_1(k)$ is a depressed cubic polynomial with $p_1(n)=q_1(n)=-\frac{27}{4}n$. By Descartes' rule of the signs and the fact that $P_1(0)=-\frac{27}{4}n<0$ we know that $P_1(k)$ always admits a unique positive root. The discriminant of $P_1(k)$ is given by
	\begin{equation*}
		\Delta_1(n)=-\Big(-\frac{27^3}{4^2}n^3+\frac{27^3}{4^2}n^2\Big)=\frac{27^3}{16}n^2(n-1)>0,
	\end{equation*}
	thus $P_1(k)$ admits three real roots. Since we are under the \textit{casus irreducibilis}, we have to provide trigonometric solutions to recognize what is the real solution we are interested in. To do this, we will use Vi\'ete's procedure. Consider the equation $P_1(k)=0$ and set $k=u\cos(\theta)$ to achieve
	\begin{equation*}
		u^3\cos^3(\theta)-\frac{27}{4}nu\cos(\theta)-\frac{27}{4}n=0.
\end{equation*}
	Multiplying everything by $\frac{4}{u^3}$ we get
	\begin{equation*}
		4\cos^3(\theta)-\frac{27}{u^2}n\cos(\theta)-\frac{27}{u^3}n=0.
	\end{equation*}
	Now set $\frac{27n}{u^2}=3$, that is to say $u=3\sqrt{n}$, to achieve
	\begin{equation*}
		4\cos^3(\theta)-3\cos(\theta)-\frac{1}{\sqrt{n}}=0.
	\end{equation*}
	Recalling that $4\cos^3(\theta)-3\cos(\theta)=\cos(3\theta)$, we get
	\begin{equation*}
		\cos(3\theta)={1}/{\sqrt{n}}
	\end{equation*}
	and then
	\begin{equation*}
		\theta=\frac{1}{3}\arccos\left({1}/{\sqrt{n}}\,\right)+\frac{2\pi j}{3},
	\end{equation*}
where $j=0,1,2$. Finally, we get the three real roots of $P_1(k)$ as
\begin{equation*}
	k_j=3\sqrt{n}\cos\Bigl(\frac{1}{3}\arccos\left({1}/{\sqrt{n}}\,\right)+\frac{2\pi j}{3}\Bigr), \ j=0,1,2.
\end{equation*}
However, we know that $P_1(k)$ admits only one positive root. Being 
\begin{equation*}
	0 \le \frac{1}{3}\arccos\left({1}/{\sqrt{n}}\,\right)\le \frac{\pi}{3},
\end{equation*}
we have that $k_0>0$ is the solution we are searching for. Thus, let us relabel
\begin{equation*}
	k(n):=3\sqrt{n}\cos\Bigl(\frac{1}{3}\arccos\left({1}/{\sqrt{n}}\,\right)\Bigr)
\end{equation*}
to conclude that $P_1(k)\ge 0$ if and only if $k \ge k(n)$.
\end{proof}
\noindent Now we can conclude the proof of Proposition~\ref{prop:exist}. Indeed, let us consider $k>k(n)$, in such a way that $\Delta_H(k)>0$. For such fixed $k$, $H(\cdot,k)$ admits two positive roots $0<z_1<z_2$. Consider any $z \in [z_1,z_2]$. Setting, without loss of generality, $c=1$, we then know that the quintuple $(z+1,k,1,z+1,k+1)$ is admissible.

\section{Proof of Proposition~\ref{prop:lower}} \label{pflower}

To prove Proposition~\ref{prop:lower}, we want to exhibit an admissible quintuple $(a,b,c,d,\theta)$ such that $G(a,b,c,d,\theta)$ can be explicitly calculated. To do this, let us consider again a quintuple of the form $(z+1,k,1,z+1,k+1)$ where $k \ge k(n)$. Moreover, let us consider $z(k,n)$ to be a positive local maximum point of $H(z,k)$, for fixed $k \ge k(n)$, such that $H(z(k,n),k) \ge 0$. If $k>k(n)$, then this maximum always exists, by a simple application of Rolle's theorem on the interval $[z_1,z_2]$, together with the fact that both $z_1,z_2$ are simple roots. If $k=k(n)$, then $z_1=z_2$ and it coincides with such local maximum of the polynomial $H(z,k(n))$. Let us evaluate it explicitly.
\begin{prop}\label{prop:critpoint}
	For fixed $n \ge 2$ and $k >\sqrt{3n}$, $H(z,k)$ admits two (eventually equal) critical points if and only if these critical points are positive and $k \ge k_0(n)$ where
	\begin{equation*}
		k_0(n)=2\sqrt{2n}\cos\Big[\frac{1}{3}\arccos\Big(\frac{3}{2\sqrt{2n}}\Big)\Big].
	\end{equation*}
	If $k>k_0(n)$, the local maximum point is given by
	\begin{equation*}
		z(k,n)=\frac{k^2-3n+\sqrt{k^4-6nk^2-6nk}}{3n}.
	\end{equation*}
	In particular, $k_0(n) \le k(n)$ and, if $k \ge k(n)$, there holds $H(z(k,n),k)\ge 0$.
\end{prop}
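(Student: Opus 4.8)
The plan is to repeat, for the $z$--derivative $\partial_z H(z,k)=-3nz^2+2(k^2-3n)z-(3n+2k)$, the cubic analysis already performed for $H$ in the previous two propositions. The critical points of $z\mapsto H(z,k)$ are the roots of $3nz^2-2(k^2-3n)z+(3n+2k)=0$, whose discriminant equals $4\bigl[(k^2-3n)^2-3n(3n+2k)\bigr]=4(k^4-6nk^2-6nk)=4k\,(k^3-6nk-6n)$; since $k>0$, its sign is that of $P_2(k):=k^3-6nk-6n$, so $H(\cdot,k)$ has two (possibly coincident) critical points exactly when $P_2(k)\ge0$. By Vi\'ete's formulas the two roots of the quadratic have sum $\tfrac{2(k^2-3n)}{3n}$, which is positive precisely because $k>\sqrt{3n}$, and product $\tfrac{3n+2k}{3n}>0$; hence, whenever they are real, both critical points are automatically positive. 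Thus the first assertion reduces to showing that $P_2(k)\ge0$ is equivalent to $k\ge k_0(n)$.

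To get the formula for $k_0(n)$ I would argue exactly as for $k(n)$: by Descartes' rule $P_2$ has a unique positive root, and its discriminant is $-4(-6n)^3-27(-6n)^2=108\,n^2(8n-9)>0$ for $n\ge2$, so $P_2$ has three real roots and we are in the \textit{casus irreducibilis}. Substituting $k=2\sqrt{2n}\cos\theta$ into $P_2(k)=0$, dividing by $(2\sqrt{2n})^3/4$ and using $4\cos^3\theta-3\cos\theta=\cos(3\theta)$, the equation becomes $\cos(3\theta)=\tfrac{3}{2\sqrt{2n}}$, which makes sense since $\tfrac{3}{2\sqrt{2n}}\le1$ for $n\ge2$; its three solutions yield $k_j=2\sqrt{2n}\cos\bigl(\tfrac13\arccos(\tfrac{3}{2\sqrt{2n}})+\tfrac{2\pi j}{3}\bigr)$ for $j=0,1,2$, and since $\tfrac13\arccos(\tfrac{3}{2\sqrt{2n}})\in[0,\tfrac{\pi}{6}]$ only $j=0$ gives a positive value, so this is the unique positive root $k_0(n)$ and $P_2(k)\ge0\iff k\ge k_0(n)$. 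Finally, $\partial_z^2 H(z,k)=-6nz+2(k^2-3n)$ is negative at the larger critical point $\tfrac{k^2-3n+\sqrt{k^4-6nk^2-6nk}}{3n}$, so that is the local maximum, i.e.\ it equals $z(k,n)$.

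For $k_0(n)\le k(n)$ I would just compare the two depressed cubics: $P_1(k)-P_2(k)=\bigl(6-\tfrac{27}{4}\bigr)n\,(k+1)=-\tfrac34 n(k+1)<0$ for $k>0$, hence $P_1(k_0(n))<P_2(k_0(n))=0$; as $P_1$ is negative on $[0,k(n))$ and $k(n)$ is its unique positive root, this forces $k_0(n)<k(n)$. In particular, for every $k\ge k(n)$ one has $k>k_0(n)$, so $z(k,n)$ is given by the displayed formula (and, since two positive roots of $H$ force $k>\sqrt{3n}$, all the previous results apply).

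It remains to check $H(z(k,n),k)\ge0$ for $k\ge k(n)$. By the previous proposition — together with the fact, established in its proof, that for $k>\sqrt{3n}$ the polynomial $H(\cdot,k)$ has exactly one negative root — such a $k$ makes $H(\cdot,k)$ have three real roots $r_0<0<r_1\le r_2$, so $H(z,k)=-n(z-r_0)(z-r_1)(z-r_2)$. On the interval $(r_1,r_2)$ we have $H(\cdot,k)>0$ with $H(r_1,k)=H(r_2,k)=0$, so $H(\cdot,k)$ attains an interior maximum there; since a cubic has at most one local maximum, this interior critical point is $z(k,n)$, whence $r_1\le z(k,n)\le r_2$ (with equalities when $r_1=r_2$). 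Then $z(k,n)-r_0>0$ because $z(k,n)>0>r_0$, while $(z(k,n)-r_1)(z(k,n)-r_2)\le0$, so $H(z(k,n),k)=-n\,(z(k,n)-r_0)(z(k,n)-r_1)(z(k,n)-r_2)\ge0$. I expect the main obstacle to be essentially computational, namely carrying out the \textit{casus irreducibilis} step cleanly (choosing $u=2\sqrt{2n}$, simplifying $\tfrac{24n}{u^3}$ to $\tfrac{3}{2\sqrt{2n}}$, and checking this lies in $[-1,1]$ for $n\ge2$); the only genuinely conceptual point is the last paragraph, where the sign of $H$ at its local maximum is tied, via the factorization of $H$, to the existence of its two positive roots.
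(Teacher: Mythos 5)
Your proposal is correct and its core is the same as the paper's: critical points of $H(\cdot,k)$ as roots of $-3nz^2+2(k^2-3n)z-(3n+2k)$, the discriminant $4k(k^3-6nk-6n)$, reduction to $P_2(k)=k^3-6nk-6n\ge 0$, and the trigonometric (\textit{casus irreducibilis}) solution giving $k_0(n)$; your use of Vi\'ete's formulas for positivity of the critical points and of the second-derivative test to pick out the local maximum are harmless substitutes for the paper's Descartes/monotonicity arguments. Two local deviations are worth noting. First, your proof of $k_0(n)\le k(n)$ via $P_1(k)-P_2(k)=-\tfrac34 n(k+1)<0$, hence $P_1(k_0(n))<0$ and $k_0(n)<k(n)$, is a cleaner and more direct route than the paper's, which obtains this inequality only as a byproduct of the analysis at $k=k(n)$ (the double root of $H$ being a critical point). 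Second, your final step ($H(z(k,n),k)\ge0$ via the factorization $H=-n(z-r_0)(z-r_1)(z-r_2)$) is essentially the paper's Rolle argument in algebraic clothing; the only glossed point is the degenerate case $k=k(n)$, where $r_1=r_2$ and the ``interior maximum on $(r_1,r_2)$'' argument is vacuous, so the claimed equality $z(k(n),n)=r_1=r_2$ needs the separate (easy) observation that near the double root $H(z,k(n))=-n(z-r_0)(z-r_1)^2\le 0$ with equality at $r_1$, so the double root is itself a local maximum and, a cubic having at most one, it must coincide with $z(k(n),n)$ --- this is exactly how the paper treats that case, and it is needed since otherwise the factorization only gives $H\le 0$ there.
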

\begin{proof}
	Let us first observe that
	\begin{equation}\label{poli1}
		H'(z,k):=\frac{\partial}{\partial \, z}H(z,k)=-3nz^2+2(k^2-3n)z-(3n+2k).
	\end{equation}
	Since $k>\sqrt{3n}$, Descartes' rule of the signs tells us that, if the solutions exist, they must be positive. Hence we have only to show that the discriminant is nonnegative. Let us determine the discriminant of the polynomial~\eqref{poli1}:
	\begin{equation}\label{Dezk}
		\Delta_z(k)=4((k^2-3n)^2-3n(3n+2k))=4k(k^3-6nk-6n).
	\end{equation}
	Being $k>0$, $\Delta_z(k)\ge 0$ if and only if
	\begin{equation*}
		P_2(k):=k^3-6nk-6n\ge 0.
	\end{equation*}
	Let us first show that this polynomial admits a unique positive root. Observe that $P_2(0)=-6n<0$ and $\lim_{k \to +\infty}P_2(k)=+\infty$, thus $P_2(k)$ admits a positive root $k_0(n)$. Moreover, Descartes' rule of the signs tells us that $P_2(k)$ admits at most one positive root, hence $k_0(n)$ is the unique positive root. \\
	Now let us determine $k_0(n)$. First of all, let us observe that, since $P_2(k)$ is a depressed cubic polynomial, its discriminant is given by
	\begin{equation*}
		\Delta_k=-(-4\cdot 6^3n^3+27 \cdot 6^2n^2)=108n^2(8n-9)>0
	\end{equation*}
	since $n \ge 2$. Thus we know $P_2(k)$ admits three different roots. Since we are under the \textit{casus irreducibilis}, we have to provide trigonometric solutions to recognize what is the real solution we are interested in. Arguing again by Vi\'{e}te's procedure and selecting the unique positive root, we get
	\begin{equation*}
		k_0(n)=2\sqrt{2n}\cos\Big[\frac{1}{3}\arccos\Big(\frac{3}{2\sqrt{2n}}\Big)\Big].
	\end{equation*}
	Hence, as $k > k_0(n)$, we can find two solutions to equation $H'(z,k)=0$. Being $H(\cdot,k)$ a polynomial with $H(0,k)=-(n-1)$ and $\lim_{z \to -\infty}H(z,k)=+\infty$, we know that $H(z,k)$ is decreasing as $z \le 0$. Thus, the first critical point has to be a local minimum and the second critical point a local maximum. Therefore, writing explicitly  the second solution of $H'(z,k)=0$, we get
	\begin{equation}\label{eq:zkn}
		z(k,n)=\frac{k^2-3n+\sqrt{k^4-6nk^2-6nk}}{3n}.
	\end{equation}
Finally, observe that, by Rolle's theorem, if $k>k(n)$, then the interval $[z_1,z_2]$ has to admit one of the two critical points. Moreover, since $H(z,k)>0$ as $z \in (z_1,z_2)$ and $H(z_1,k)=H(z_2,k)=0$, then such critical point is the local maximum and $H(z(k,n),k)$ is positive. If $k=k(n)$, then $z_1=z_2$ is a double root and $H(z_1,k(n))=H(z_2,k(n))=0$. Being $H(z,k(n))\le 0$ for any $z \ge 0$ (since the other simple root is negative), we have that $z_1=z_2=z(k(n),n)$ and $H(z(k(n),n),k(n))=0$. This also obviously implies $k_0(n)< k(n)$.
\end{proof}
\noindent Next, we want to evaluate $G$ on the quintuple $(z(k,n)+1,k,1,z(k,n)+1,k+1)$ as $k \ge k(n)$. To do this, we first need to exploit a simple property of $z(k,n)$.
\begin{prop}
	For any $k \ge k(n)$ there holds $z(k,n)\ge 2$.
\end{prop}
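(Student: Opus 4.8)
The plan is to strip the square root from the definition~\eqref{eq:zkn} and reduce the claim to a quadratic inequality in $k$, which I will then settle by invoking the cubic equation that defines $k(n)$. First, for $k\ge k(n)$ the radicand in~\eqref{eq:zkn} is positive, since $k^4-6nk^2-6nk = kP_2(k)$ and $k\ge k(n)>k_0(n)$ by Proposition~\ref{prop:critpoint}. Then $z(k,n)\ge 2$ is equivalent to
\[
\sqrt{k^4-6nk^2-6nk}\ \ge\ 9n-k^2 .
\]
If $9n-k^2\le 0$ this is immediate; if $9n-k^2>0$ one squares both sides and, after cancelling $k^4$ and dividing by $3n>0$, obtains the equivalent inequality $4k^2-2k-27n\ge 0$. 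So in all cases it suffices to prove that
\[
g(k):=4k^2-2k-27n\ \ge\ 0\qquad\text{for all }k\ge k(n).
\]

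The polynomial $g$ is an upward parabola whose vertex sits at $k=\tfrac14$, and $k(n)>\sqrt{3n}>\tfrac14$, so $g$ is increasing on $[k(n),\infty)$; hence it is enough to check $g(k(n))\ge 0$. The point of the whole reduction is that $g(k(n))$ collapses once we remember that $k(n)$ is, by construction, the positive root of the depressed cubic $P_1(k)=k^3-\tfrac{27}{4}nk-\tfrac{27}{4}n$, that is, $k(n)^3=\tfrac{27n}{4}\bigl(k(n)+1\bigr)$. Dividing this identity by $k(n)$ and multiplying by $4$ gives $4k(n)^2=27n+\tfrac{27n}{k(n)}$, and therefore
\[
g(k(n))=\frac{27n}{k(n)}-2k(n)=\frac{27n-2k(n)^2}{k(n)} .
\]

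Finally, the explicit formula $k(n)=3\sqrt{n}\cos\bigl(\tfrac13\arccos(1/\sqrt n)\bigr)$ gives $k(n)<3\sqrt n$, since the cosine factor is strictly less than $1$; hence $k(n)^2<9n<\tfrac{27n}{2}$ and $g(k(n))>0$. Combined with the monotonicity of $g$ on $[k(n),\infty)$ this yields $g(k)>0$, i.e. $4k^2-2k-27n\ge 0$, for every $k\ge k(n)$, and the reduction above then gives $z(k,n)\ge 2$, completing the proof.

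I do not expect a genuine obstacle here: the only two points that require a little care are the case split on the sign of $9n-k^2$ before squaring, and the decision to exploit the defining relation $P_1(k(n))=0$ instead of substituting the trigonometric expression for $k(n)$ directly — the latter route would make the evaluation of $g(k(n))$ needlessly messy, whereas the former turns it into a one-line computation.
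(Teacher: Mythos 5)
Your proof is correct, and while it opens with the same reduction as the paper (split on the sign of $9n-k^2$, square, and arrive at $4k^2-2k-27n\ge 0$), it settles that quadratic inequality by a genuinely different and more elementary route. The paper introduces the positive root $k_1(n)=\frac{1+\sqrt{1+108n}}{4}$ of the quadratic and proves $k_1(n)<k(n)$ indirectly: it studies the envelope function $g(k)=H(z(k,n),k)$, shows $g'(k)=2z(k,n)(kz(k,n)-1)\ge 0$ for $k\ge k_0(n)$ (which requires verifying $kz(k,n)\ge1$ via another square-root inequality), notes $g(k(n))\ge 0$ by the definition of $k(n)$, and computes $g(k_1(n))=H(2,k_1(n))=-2k_1(n)+1<0$, forcing $k_1(n)<k(n)$. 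You instead evaluate the quadratic directly at $k(n)$ using the defining relation $P_1(k(n))=0$, i.e. $k(n)^3=\tfrac{27n}{4}(k(n)+1)$, which gives $4k(n)^2-2k(n)-27n=\tfrac{27n}{k(n)}-2k(n)>0$ because the cosine factor in the trigonometric formula yields $k(n)<3\sqrt n$, hence $2k(n)^2<18n<27n$; monotonicity of the upward parabola on $[k(n),\infty)$ (with $k(n)>\sqrt{3n}>\tfrac14$) then finishes the argument. What your approach buys is brevity and self-containedness: it avoids the auxiliary root $k_1(n)$, the case split $k_1(n)\lessgtr k_0(n)$, and the monotonicity-of-$H$-along-critical-points argument, using only facts already established about $k(n)$ (it is the positive root of $P_1$ and is bounded by $3\sqrt n$). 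The paper's argument, by contrast, reuses structural information about $H(z(k,n),k)$ that is of independent interest later in Section~4, but is longer for this particular statement. The only points to keep explicit in your write-up are the nonnegativity of the radicand for $k\ge k(n)$ (which you correctly trace to $kP_2(k)\ge0$ and $k(n)>k_0(n)$) and the legitimacy of the squaring step when $9n-k^2>0$, both of which you handle.
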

\begin{proof}
	By equation~\eqref{eq:zkn}, we have that $z(k,n)\ge 2$ if and only if
	\begin{equation}\label{eq:ineq21}
	\sqrt{k^4-6nk^2-6nk}\ge 9n-k^2.
	\end{equation}
	Being $k \ge k(n)$, we already know that the quantity under the square root is nonnegative. On the other hand, if $k\ge 3\sqrt{n}$, then inequality~\eqref{eq:ineq21} holds true. Let us consider $k(n)\le k <3\sqrt{n}$. Then inequality~\eqref{eq:ineq21} is equivalent to
	\begin{equation*}
	k^4-6nk^2-6nk\ge 81n^2-18nk^2+k^4
	\end{equation*}
	that is to say
	\begin{equation*}
	4k^2-2k-27n\ge 0.
	\end{equation*}
	Since $k(n)>0$, the previous inequality is verified as
	\begin{equation*}
	k \ge \frac{1+\sqrt{1+108n}}{4}=:k_1(n)>\frac{1}{2}.
	\end{equation*}
	To conclude the proof, we have to show that $k_1(n)\le k(n)$. Being $k_0(n)\le k(n)$, this is obvious if $k_1(n)\le k_0(n)$, thus let us suppose $k_1(n)>k_0(n)$.\\
	Let us consider the function $g(k):=H(z(k,n),k)$ for $k \ge k_0(n)$. Being $z(k,n)$ a local maximum of $H$, there holds $\frac{\partial \, H}{\partial \, z}(z(k,n),k)=0$. Thus, we have
	\begin{equation*}
	g'(k)=2kz^2(k,n)-2z(k,n)=2z(k,n)(kz(k,n)-1).
	\end{equation*}
	In particular $g'(k)\ge 0$ if and only if $kz(k,n)\ge 1$. The latter holds if and only if
	\begin{equation}\label{eq:ineq22}
	k\sqrt{k^4-6nk^2-6nk}\ge 3n+3nk-k^3=-\frac{\Delta_z(k)}{4k}-3n(1+k),
	\end{equation}
	where $\Delta_z(k)$ is defined in equation~\eqref{Dezk}. Being $k \ge k_0(n)$, we have that $\Delta_z(k)\ge 0$ and then inequality~\eqref{eq:ineq22} is verified. This implies, in particular, that $g(k)$ is increasing as $k \ge k_0(n)$. Moreover, by definition of $k(n)$, there holds $g(k(n))\ge 0$. On the other hand, being $z(k_1(n),n)=2$ and $k_1(n)>\frac{1}{2}$, we get
	\begin{equation*}
	g(k_1(n))=H(2,k_1(n))=-2k_1(n)+1<0.
	\end{equation*}
	Hence, we have $k_1(n)<k(n)$, concluding the proof.
\end{proof}
\noindent With this property in mind, we can evaluate $G(z(k,n)+1,k,1,z(k,n)+1,k+1)$. To do this, let us first observe that
\begin{equation*}
G_1(k,z(k,n)+1,k+1)=\frac{4(z(k,n)+1)}{(k+1)^2}.
\end{equation*}
Concerning $G_2(z(k,n)+1,k,1,z(k,n)+1,k+1)$, it is the unique solution of
\begin{equation*}
z(k,n)(k+1)^2x-4(z(k,n)+1)=0
\end{equation*}
that is to say
\begin{equation*}
G_2(z(k,n)+1,k,1,z(k,n)+1,k+1)=\frac{4(z(k,n)+1)}{z(k,n)(k+1)^2}.
\end{equation*}
Hence, we have
\begin{equation*}
G(z(k,n)+1,k,1,z(k,n)+1,k+1)=\frac{4(z(k,n)+1)}{(k+1)^2}\min\Big\{1,\frac{1}{z(k,n)}\Big\}.
\end{equation*}
However, being $z(k,n)\ge 2$ by the previous proposition, there holds 
$$
\min\Bigl\{1,\frac{1}{z(k,n)}\Bigr\}=\frac{1}{z(k,n)}
$$
then
\begin{equation*}
G(z(k,n)+1,k,1,z(k,n)+1,k+1)=\frac{4}{(k+1)^2}\Big(1+\frac{1}{z(k,n)}\Big).
\end{equation*}
Now we want to optimize on $k \ge k(n)$. To do this, let us show the following Proposition.
\begin{prop}
	The function $k \mapsto z(k,n)$ is increasing on $[k(n),+\infty)$.
\end{prop}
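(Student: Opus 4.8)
The plan is to differentiate $z(\cdot,n)$ with respect to $k$ and show the derivative is positive. First I would record two facts about $z(k,n)$. By Proposition~\ref{prop:critpoint}, for $k>k_0(n)$ the number $z(k,n)$ is the larger root of $H'(\cdot,k)$, hence it satisfies the identity
\begin{equation*}
3n\,z(k,n)^2-2(k^2-3n)\,z(k,n)+(3n+2k)=0 ;
\end{equation*}
moreover, reading off~\eqref{eq:zkn} directly,
\begin{equation*}
3n\,z(k,n)-(k^2-3n)=\sqrt{k^4-6nk^2-6nk}>0 \qquad\text{for } k>k_0(n),
\end{equation*}
which in particular shows that $z(\cdot,n)$ is a smooth function on $(k_0(n),+\infty)$, an interval containing $[k(n),+\infty)$ since $k_0(n)<k(n)$ by Proposition~\ref{prop:critpoint}.

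Next I would differentiate the quadratic identity in $k$ (equivalently, note that $\partial_z H'(z(k,n),k)\neq 0$ and apply the implicit function theorem) to obtain
\begin{equation*}
\partial_k z(k,n)=\frac{2k\,z(k,n)-1}{3n\,z(k,n)-(k^2-3n)}=\frac{2k\,z(k,n)-1}{\sqrt{k^4-6nk^2-6nk}} .
\end{equation*}
The denominator is positive for $k>k_0(n)$, so everything reduces to the sign of the numerator $2k\,z(k,n)-1$. But this is precisely where the computations done for the previous Proposition pay off: there it was shown that $k\,z(k,n)\ge 1$ whenever $\Delta_z(k)\ge 0$ — this is exactly inequality~\eqref{eq:ineq22} — hence for every $k\ge k_0(n)$. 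Therefore $2k\,z(k,n)-1\ge 1>0$, so $\partial_k z(k,n)>0$ on $(k_0(n),+\infty)$, and by continuity $z(\cdot,n)$ is strictly increasing on $[k_0(n),+\infty)$, a fortiori on $[k(n),+\infty)$.

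Alternatively, one can avoid the implicit differentiation and argue straight from~\eqref{eq:zkn}: $z(k,n)$ is the sum of $\tfrac{1}{3n}(k^2-3n)$, which is increasing in $k>0$, and $\tfrac{1}{3n}\sqrt{k^4-6nk^2-6nk}$, whose radicand has derivative $4k^3-12nk-6n=2\bigl(k^3+(k^3-6nk-6n)+3n\bigr)=2\bigl(k^3+P_2(k)+3n\bigr)$, positive for $k\ge k_0(n)$ since $P_2(k)\ge 0$ there; hence both summands are nondecreasing and so is $z(\cdot,n)$. I do not expect a genuine obstacle here: the only points to be careful about are that the closed-form~\eqref{eq:zkn} is valid and differentiable only for $k>k_0(n)$ (handled by $k_0(n)<k(n)$) and the positivity of $2k\,z(k,n)-1$, which is already contained in the preceding arguments.
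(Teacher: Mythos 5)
Your proof is correct. Your ``alternative'' argument is in fact exactly the paper's proof: the authors differentiate the explicit formula~\eqref{eq:zkn} and split the numerator $2k^3-6nk-3n$ as $\frac{\Delta_z(k)}{4k}+k^3+3n=P_2(k)+k^3+3n$, which is nonnegative because $\Delta_z(k)\ge 0$ (equivalently $P_2(k)\ge 0$) for $k\ge k(n)\ge k_0(n)$ --- precisely your decomposition of the radicand's derivative. Your primary route is a mild but genuine variant: instead of differentiating the closed form, you differentiate the defining quadratic $H'(z(k,n),k)=0$ implicitly, obtaining $\partial_k z=\frac{2kz-1}{\sqrt{k^4-6nk^2-6nk}}$, and then recycle the inequality $k\,z(k,n)\ge 1$ (the paper's~\eqref{eq:ineq22}, valid for all $k\ge k_0(n)$) to get positivity of the numerator; alternatively, on $[k(n),+\infty)$ one could simply quote $z(k,n)\ge 2$ and $k(n)>\sqrt{3n}>1$. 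What the implicit route buys is that it avoids manipulating the square root directly and makes transparent which previously established facts are used (simplicity of the larger critical point, i.e.\ $\Delta_z(k)>0$ for $k>k_0(n)$ with $k_0(n)<k(n)$, and $kz\ge 1$); the paper's direct computation is shorter and self-contained. Both hinge on the same nonnegativity of $\Delta_z(k)$, and you correctly flag the only delicate points (differentiability of $z(\cdot,n)$ only for $k>k_0(n)$, and the sign of $2kz-1$), so there is no gap.
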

\begin{proof}
	Just observe that
	\begin{align*}
	z'(k)&=\frac{1}{3n}\Big(2k+\frac{2k^3-6nk-3n}{\sqrt{k^4-6nk^2-6nk}}\Big)
	\\&=\frac{1}{3n}\Big(2k+\frac{\Delta_z(k)}{4k\sqrt{k^4-6nk^2-6nk}}+\frac{k^3+3n}{\sqrt{k^4-6nk^2-6nk}}\Big)\ge 0,
	\end{align*}
	being $\Delta_z(k)\ge 0$ by the fact that $k \ge k(n)\ge k_0(n)$.
\end{proof}
\noindent The previous proposition implies that the function $k \mapsto G(z(k,n)+1,k,1,z(k,n)+1,k+1)$ is decreasing as $k \ge k(n)$, thus it achieve its maximum value as $k=k(n)$. Setting
\begin{equation*}
\widetilde{G}(n):=G(z(k(n),n)+1,k(n),1,z(k(n),n)+1,k(n)+1), \, n \ge 0,
\end{equation*}
we conclude the proof of Proposition~\ref{prop:lower}.

\section{Matrix Harnack inequalities}\label{matharn}

We begin the section with the following technical lemma.

\begin{lem}
There exists a cone $\mathcal{C}'$ of an admissible quintuples $(a,b,c,d,\theta)$ such that $a=d$ and $b-a+c<0$.
\end{lem}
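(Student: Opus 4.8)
The plan is to reuse the construction from Sections~\ref{pfexist} and~\ref{pflower}, where admissible quintuples of the form $(z+1,k,1,z+1,k+1)$ with $k\ge k(n)$ and $z\in[z_1,z_2]$ were produced; note that these already satisfy $a=d=z+1$, so the only extra requirement to check is the strict inequality $b-a+c<0$, which for such a quintuple reads $k-(z+1)+1<0$, i.e.\ $k<z$. Thus it suffices to find, for each $n\ge 2$, one admissible quintuple of the above shape with $z>k$; the cone property (scaling by $\lambda>0$ preserves both admissibility and the open condition $b-a+c<0$) then upgrades this single example to a full cone $\mathcal{C}'\subset\mathcal{C}$.

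The key step is therefore an estimate on the size of the positive roots $z_1<z_2$ of $H(z,k)=0$ relative to $k$, for $k$ large. First I would record the sign information already available: $H(0,k)=-(n-1)<0$ and $H(z,k)\to+\infty$ as $z\to-\infty$, while $H(z,k)\to-\infty$ as $z\to+\infty$; combined with the leading term $-nz^3$ and the $+k^2z^2$ term, for large $k$ the two positive roots are well separated, with the larger root $z_2$ comparable to $k^2/n$. Concretely, I would evaluate $H$ at $z=k$:
\begin{equation*}
H(k,k)=-nk^3+(k^2-3n)k^2-(3n+2k)k-(n-1)=k^4-(n+2)k^3-3nk^2-3nk-(n-1).
\end{equation*}
For $k$ sufficiently large (depending on $n$) the $k^4$ term dominates, so $H(k,k)>0$. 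Since $H(\cdot,k)>0$ precisely on the open interval $(z_1,z_2)$, this shows $z_1<k<z_2$, and in particular choosing $z\in(k,z_2]$ gives an admissible quintuple with $z>k$, hence $b-a+c=k+1-(z+1)<0$ strictly. This is the main (and essentially only) obstacle, and it is mild: it is just the observation that for large $k$ the larger root $z_2$ outgrows $k$.

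Finally I would assemble the argument: fix $n\ge 2$, pick $k^\ast>k(n)$ large enough that $H(k^\ast,k^\ast)>0$ (possible by the explicit quartic above), choose $z^\ast\in(k^\ast,z_2(k^\ast))$, and set the base quintuple $(z^\ast+1,k^\ast,1,z^\ast+1,k^\ast+1)$. By the results of Section~\ref{pfexist} this quintuple lies in $\mathcal{C}$; it satisfies $a=d$ by construction and $b-a+c=k^\ast-z^\ast<0$ by the choice of $z^\ast$. Then
\begin{equation*}
\mathcal{C}':=\{\lambda\,(z^\ast+1,k^\ast,1,z^\ast+1,k^\ast+1):\lambda>0\}
\end{equation*}
is the desired cone: admissibility is scale invariant, the condition $a=d$ is scale invariant, and $b-a+c<0$ is preserved under multiplication by $\lambda>0$. (One could instead, if a cleaner bound is wanted, use the monotonicity of $k\mapsto z(k,n)$ proved in Section~\ref{pflower} together with $z(k,n)\ge 2$ to argue that $z(k,n)-k\to+\infty$, but the pointwise evaluation of $H(k,k)$ is the most direct route.) This completes the proof of the lemma.
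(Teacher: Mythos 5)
Your proposal is correct and takes essentially the same route as the paper: both use the admissible family $(z+1,k,1,z+1,k+1)$ from the proofs of Propositions~\ref{prop:exist} and~\ref{prop:lower}, reduce the condition $b-a+c<0$ to $k<z$, verify it for $k$ large, and invoke scaling to get a cone --- the paper certifies $k<z(k,n)$ through the explicit root formula for the local maximum, while you instead observe $H(k,k)>0$ for large $k$ so that $z_1<k<z_2$ and then pick $z\in(k,z_2)$, an equally valid and slightly more elementary check. Only a harmless algebra slip: $H(k,k)=k^4-nk^3-(3n+2)k^2-3nk-(n-1)$, not $k^4-(n+2)k^3-3nk^2-3nk-(n-1)$, but the dominance of the $k^4$ term, and hence your conclusion, is unaffected.
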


\begin{proof}
Let us consider $k \geq k(n)$ with $k(n)$ defined in Proposition~\ref{prop:lower} and let $(z(k,n)+1,k,1,z(k,n)+1,k+1) \in \mathcal{C}$ with $z(k,n)$ defined in~\eqref{eq:zkn}. The condition $b-a+c<0$ becomes $k<z(k,n)$, \textit{i.e.} 
$$
3nk-k^2+3n<\sqrt{k^4-6nk^2-6nk}.
$$
This is obviously true if 
$$
k>\frac{3n+\sqrt{9n^2+12n}}{2}.
$$
With this choice of $k$ we conclude the proof.
\end{proof}

\begin{rem}
It is easy to see that if $(a,b,c,d,\theta) \in \mathcal{C}$ then also $(a,b,c,a,\theta) \in \mathcal{C}$.
\end{rem}

We will now derive some Harnack--type inequalities as a consequence of Theorem~\ref{main}.

\begin{prop}\label{harn}
Let $(M,g)$ be an $n$--dimensional complete Riemannian manifold with nonnegative sectional curvatures as well as parallel Ricci tensor. Consider $(a,b,c,a,\theta) \in \mathcal{C'}$ and $1<p<1 + G(a.b,c,a,\theta)$. Let $u:M\times[0,T)\to\R$ be a classical positive solution of the equation $u_t = \Delta u + u^p$, then there exists $\varepsilon =\varepsilon(n,p,a,b,c,\theta)$ such that, given any $0<t_1<t_2\leq T$ and $x_1,x_2 \in M$, the following inequality holds
\begin{equation*}
u(x_1,t_1) \leq u(x_2,t_2) \Big(\frac{t_2}{t_1}\Big)^{1/\varepsilon} \exp(\psi(x_1,x_2,t_1,t_2)),
\end{equation*}
where 
\begin{align*}
\psi(x_1,x_2,t_1,t_2)&:= \inf_{\gamma \in \Gamma(x_1,x_2)} \int_{0}^{1} \Big[\frac{a \vert \dot{\gamma}(s)\vert^2}{4(a-b-c) (t_2-t_1)}\\
&\qquad\qquad\qquad \qquad + \frac{\theta (t_2-t_1)}{a} \rho(\gamma(s), (1-s)t_2 + st_1) \Big] \, ds,
\end{align*}
with $\Gamma(x_1,x_2)$ given by all the paths in $M$ parametrized by $[0,1]$ joining $x_2$ to $x_1$, $f= \log u$ and $\rho=\max \{ |\lambda_i |: \, \lambda_i \textit{ eigenvalue of } f_{ij} \}$. 
\end{prop}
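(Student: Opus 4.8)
The plan is to integrate the estimate of Theorem~\ref{main} along space--time curves, in the spirit of Li \& Yau. Since $(a,b,c,a,\theta)\in\mathcal{C}'\subseteq\mathcal{C}$ and $1<p<1+G(a,b,c,a,\theta)$, Theorem~\ref{main} provides a constant $\varepsilon>0$ such that
\[
t\left(\theta f_{ij}+a\Delta f\,g_{ij}+bf_if_j+c|\nabla f|^2 g_{ij}+ae^{(p-1)f}g_{ij}\right)\ge-\frac{1}{\varepsilon}g_{ij}
\]
in the sense of tensors in $M\times(0,T)$. The first step is to convert this into a pointwise lower bound for $f_t$. Contracting against an arbitrary unit vector $v$ at a point, using $f_{ij}v^iv^j\le\rho$ and $(f_iv^i)^2\le|\nabla f|^2$ together with the identity $\Delta f=f_t-|\nabla f|^2-e^{(p-1)f}$, one sees that the two terms carrying the reaction factor $e^{(p-1)f}$ cancel, precisely because $d=a$ for the chosen quintuple; what remains is
\[
a f_t\ge-\frac{1}{\varepsilon t}-\theta\rho+(a-b-c)|\nabla f|^2 .
\]
Here the coefficient $a-b-c$ is strictly positive exactly because $(a,b,c,a,\theta)\in\mathcal{C}'$, which is where the refined cone enters.

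Next, fix $x_1,x_2\in M$ and $0<t_1<t_2\le T$, pick any curve $\gamma\in\Gamma(x_1,x_2)$, and consider the space--time path $s\mapsto(\gamma(s),\tau(s))$ with $\tau(s)=(1-s)t_2+st_1$, so that $\tau(0)=t_2$, $\tau(1)=t_1$ and $\dot\tau=-(t_2-t_1)<0$. Along it,
\[
f(x_1,t_1)-f(x_2,t_2)=\int_0^1\bigl[\langle\nabla f,\dot\gamma\rangle+\dot\tau\,f_t\bigr]\,ds .
\]
Since $\dot\tau<0$, the lower bound for $f_t$ yields an upper bound for $\dot\tau f_t$; estimating $\langle\nabla f,\dot\gamma\rangle\le|\nabla f|\,|\dot\gamma|$ and applying Young's inequality with the weight $\tfrac{(a-b-c)(t_2-t_1)}{a}$ --- chosen so that the two contributions proportional to $|\nabla f|^2$ cancel exactly --- gives
\[
\langle\nabla f,\dot\gamma\rangle+\dot\tau\,f_t\le\frac{a|\dot\gamma(s)|^2}{4(a-b-c)(t_2-t_1)}+\frac{(t_2-t_1)\,\theta\,\rho(\gamma(s),\tau(s))}{a}+\frac{t_2-t_1}{a\,\varepsilon\,\tau(s)} .
\]
Integrating over $s\in[0,1]$ and using $\int_0^1\tau(s)^{-1}\,ds=(t_2-t_1)^{-1}\log(t_2/t_1)$, then taking the infimum over $\gamma\in\Gamma(x_1,x_2)$ and exponentiating (recall $u=e^f$), we arrive at
\[
u(x_1,t_1)\le u(x_2,t_2)\Bigl(\frac{t_2}{t_1}\Bigr)^{1/(a\varepsilon)}\exp\bigl(\psi(x_1,x_2,t_1,t_2)\bigr),
\]
with $\psi$ exactly as in the statement; absorbing the factor $a$ into $\varepsilon$ gives the asserted form.

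The bulk of this is routine once Theorem~\ref{main} is in hand; the two points that actually require care are (i) arranging that no reaction term survives in the $f_t$ estimate, which forces the normalization $d=a$ built into $\mathcal{C}'$, and (ii) the choice of weight in Young's inequality, which requires $a-b-c>0$, again built into $\mathcal{C}'$. It is also worth noting that the hypothesis $t_1>0$ is used so that the estimate of Theorem~\ref{main} is available on the whole interval $[t_1,t_2]$; that it suffices to take the infimum over Lipschitz (piecewise $C^1$) curves $\gamma$, for which the chain rule above is valid; and that the dependence $\varepsilon=\varepsilon(n,p,a,b,c,\theta)$ is simply inherited from Theorem~\ref{main}.
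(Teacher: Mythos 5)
Your argument is correct and is essentially the paper's own proof: both apply Theorem~\ref{main} with $d=a$, substitute $\Delta f = f_t - |\nabla f|^2 - e^{(p-1)f}$ so the reaction terms cancel, bound $f_{ij}$ by its spectral radius $\rho$ and $f_if_j$ by $|\nabla f|^2$, and then integrate along the space--time path $(\gamma(s),(1-s)t_2+st_1)$ using Young's inequality with weight $\tfrac{(a-b-c)(t_2-t_1)}{a}$ (so $a-b-c>0$ from $\mathcal{C}'$ is used exactly as in the paper) before taking the infimum over $\gamma$ and exponentiating. The only cosmetic difference is that you derive the scalar bound on $f_t$ by contracting with a unit vector, whereas the paper applies the tensor inequality to $\dot\gamma$, and you make explicit the harmless absorption of the factor $a$ into $\varepsilon$, which the paper leaves implicit.
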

\begin{proof}
By Theorem~\ref{main} we know that there exists $\varepsilon$ such that
\begin{equation}\label{harna0}
\theta f_{ij} + a \Delta f g_{ij} + b f_i f_j + c |\nabla f|^2 g_{ij} + a e^{(p-1)f} g_{ij}+ \frac{1}{\varepsilon t} g_{ij} \geq 0.
\end{equation}
Recalling that $f$ satisfies
\begin{equation*}
f_t = \Delta f + |\nabla f|^2 + e^{(p-1)f},
\end{equation*}
we get 
\begin{equation}\label{harna1}
-f_t \, g_{ij} \leq \frac{\theta}{a} f_{ij} + \frac{b}{a} f_i f_j -\frac{a-c}{a} |\nabla f|^2 g_{ij} + \frac{1}{a \varepsilon t} g_{ij}.
\end{equation}
Since $\rho$ is the spectral radius of $f_{ij}$ we get $f_{ij} \leq \rho g_{ij}$. On the other hand the matrix $f_i f_j$ has rank one and the only nonzero eigenvalue is $|\nabla f|^2$, then $f_i f_j \leq |\nabla f|^2 g_{ij}$. Plugging these inequalities into~\eqref{harna1} we obtain
\begin{equation}\label{harna2}
-f_t \, g_{ij} \leq \Big(\frac{\theta}{a} \rho -\frac{a-c-b}{a} |\nabla f|^2 + \frac{1}{a \varepsilon t} \Big) g_{ij}.
\end{equation}
Now let us consider any $\gamma \in \Gamma(x_1,x_2)$ as well as $\eta:[0,1]\to M\times[t_1,t_2]$ defined as $\eta(s)= (\gamma(s), (1-s)t_2+st_1)$. Evaluating~\eqref{harna2} in $\eta(s)$ and applying it to $\dot{\gamma}(s)$ we see that
\begin{equation}\label{harna3}
-f_t (\eta(s)) |\dot{\gamma}(s)|^2 \leq \Big(\frac{\theta}{a} \rho(\eta(s)) -\frac{a-c-b}{a} |\nabla f(\eta(s))|^2 + \frac{1}{a \varepsilon ((1-s)t_2+st_1)} \Big) |\dot{\gamma}(s)|^2.
\end{equation}
Noticing that $\eta(0)= (x_2, t_2)$ and $\eta(1)= (x_1, t_1)$, we have
\begin{align*}
f(x_1,t_1) - &f(x_2,t_2)  = \int_{0}^{1} \Big( \frac{d\,}{ds} f(\eta(s)) \Big) \, ds\\ 
&\,= \int_{0}^{1} \left[\langle \nabla f(\eta(s)), \dot{\gamma} (s) \rangle - (t_2-t_1) f_s (\eta(s)) \right] \, ds\\
&\, \leq \int_{0}^{1} \Big[ \vert \nabla f(\eta(s)) \vert \vert \dot{\gamma}\vert + (t_2-t_1) \Big(\frac{1}{a \varepsilon [(1-s)t_2+st_1]}\\ 
& \qquad\qquad\qquad\quad\qquad\qquad\qquad\qquad\left.-\frac{a-c-b}{a} \vert\nabla f(\eta(s)) \vert^2 + \frac{\theta}{a} \rho(\eta(s)) \right) \Big] \, ds\\
&\, = \int_{0}^{1} \frac{t_2-t_1}{a \varepsilon [(1-s)t_2+st_1]} \, ds + \int_{0}^{1} \Big[\vphantom{\frac{(a-c-b)(t_2-t_1)}{a}} \vert \nabla f (\eta(s))\vert \vert \dot{\gamma}(s)\vert\\
&\qquad\qquad\qquad\qquad - \frac{(a-c-b)(t_2-t_1)}{a} \vert\nabla f (\eta(s))\vert^2+ \frac{\theta(t_2-t_1)}{a} \rho(\eta(s)) \Big] \, ds\\
&\, \leq \frac{1}{a \varepsilon} \log \Big( \frac{t_2}{t_1}\Big) + \int_{0}^{1} \Big[\frac{a \vert \dot{\gamma}(s)\vert^2}{4(a-b-c) (t_2-t_1)} + (t_2-t_1) \frac{\theta}{a} \rho(\eta(s)) \Big] \, ds.
\end{align*}
Being $\gamma \in \Gamma(x_1,x_2)$ arbitrary, we conclude the proof.
\end{proof}

With the same strategy we can also prove the following variant.

\begin{prop}\label{harn2}
Let $(M,g)$ be an $n$--dimensional complete Riemannian manifold with nonnegative sectional curvatures as well as parallel Ricci tensor. Consider $(a,b,c,a,\theta) \in \mathcal{C'}$ and $1<p<1 + G(a.b,c,a,\theta)$. Let $u:M\times[0,T)\to\R$ a classical positive solution of the equation $u_t = \Delta u + u^p$, then there exists $\varepsilon =\varepsilon(n,p,a,b,c,\theta)$ such that, given any $0<t_1<t_2\leq T$ and $x_1,x_2 \in M$, the following inequality holds
\begin{equation*}
u(x_1,t_1) \leq u(x_2,t_2) \Big(\frac{t_2}{t_1}\Big)^{1/\varepsilon} \exp(\psi(x_1,x_2,t_1,t_2)),
\end{equation*}
where 
\begin{align*}
\psi(x_1,x_2,t_1,t_2)&:= \inf_{\gamma \in \Gamma(x_1,x_2)} \int_{0}^{1} \Big[\frac{na \vert \dot{\gamma}(s)\vert^2}{4(na-nb-nc+\theta) (t_2-t_1)}\\&\qquad\qquad\qquad\qquad+ \frac{\theta (t_2-t_1)}{a} \ringg{\rho}(\gamma(s), (1-s)t_2 + st_1) \Big] \, ds,
\end{align*}
with $\Gamma(x_1,x_2)$ given by all the paths in $M$ parametrized by $[0,1]$ joining $x_2$ to $x_1$, $f= \log u$, $\ringg{f}_{ij} = f_{ij}- \frac{\Delta f}{n} g_{ij} $ and $\ringg{\rho}=\max \{ |\lambda_i |: \, \lambda_i \textit{ eigenvalue of } \ringg{f}_{ij} \}$. 
\end{prop}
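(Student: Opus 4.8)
The strategy is identical to that of Proposition~\ref{harn}, but now we retain the full Li~\&~Yau matrix estimate and exploit the traceless Hessian $\ringg{f}_{ij}$ rather than bounding $f_{ij}$ directly by its spectral radius. Starting from Theorem~\ref{main} with the quintuple $(a,b,c,a,\theta)\in\mathcal{C}'$, we again use the evolution identity $f_t=\Delta f+|\nabla f|^2+e^{(p-1)f}$ to trade the term $a\,\Delta f\,g_{ij}$ for $-a f_t g_{ij}$ plus $a|\nabla f|^2 g_{ij}+a e^{(p-1)f}g_{ij}$; the zeroth--order exponential terms cancel against the $a e^{(p-1)f}g_{ij}$ already present, leaving
\begin{equation*}
-f_t\,g_{ij}\le \frac{\theta}{a}f_{ij}+\frac{b}{a}f_i f_j-\frac{a-c}{a}|\nabla f|^2 g_{ij}+\frac{1}{a\varepsilon t}g_{ij}.
\end{equation*}
The point now is to rewrite $\theta f_{ij}=\theta\ringg{f}_{ij}+\tfrac{\theta}{n}\Delta f\,g_{ij}$ and then, using once more $\Delta f=f_t-|\nabla f|^2-e^{(p-1)f}$, move the $\tfrac{\theta}{n}\Delta f\,g_{ij}$ term to the left side. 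After collecting terms one obtains an inequality of the form
\begin{equation*}
-\Bigl(1+\tfrac{\theta}{na}\Bigr)f_t\,g_{ij}\le \frac{\theta}{a}\ringg{f}_{ij}+\frac{b}{a}f_i f_j-\frac{na-nc+\theta}{na}|\nabla f|^2 g_{ij}+\frac{\theta}{na}e^{(p-1)f}g_{ij}+\frac{1}{a\varepsilon t}g_{ij},
\end{equation*}
and since $e^{(p-1)f}>0$ we may drop that term; dividing through and using $\ringg{f}_{ij}\le\ringg{\rho}\,g_{ij}$ and $f_i f_j\le|\nabla f|^2 g_{ij}$ yields, after renaming $\varepsilon$,
\begin{equation*}
-f_t\,g_{ij}\le\Bigl(\frac{\theta}{na+\theta}\ringg{\rho}-\frac{na-nb-nc+\theta}{na+\theta}|\nabla f|^2+\frac{1}{\varepsilon t}\Bigr)g_{ij}.
\end{equation*}
(The positivity of the coefficient $na-nb-nc+\theta$ is exactly what makes the Young--inequality step below work, and follows from $(a,b,c,a,\theta)\in\mathcal{C}'$, for which $b-a+c<0$, hence $na-nb-nc>0$, hence $na-nb-nc+\theta>0$.)

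The second part of the argument is then verbatim the path--integration of Proposition~\ref{harn}. Fix $x_1,x_2\in M$ and a path $\gamma\in\Gamma(x_1,x_2)$, set $\eta(s)=(\gamma(s),(1-s)t_2+st_1)$, evaluate the displayed inequality at $\eta(s)$ applied to the vector $\dot\gamma(s)$, and integrate $\tfrac{d}{ds}f(\eta(s))=\langle\nabla f,\dot\gamma\rangle-(t_2-t_1)f_t$ over $[0,1]$. The $|\nabla f||\dot\gamma|$ term is absorbed via Young's inequality against the (good--signed) $|\nabla f|^2$ term, producing the quadratic $\tfrac{na|\dot\gamma|^2}{4(na-nb-nc+\theta)(t_2-t_1)}$; the $\tfrac1{\varepsilon t}$ term integrates to $\tfrac1\varepsilon\log(t_2/t_1)$ after absorbing constants into $\varepsilon$; and the $\ringg{\rho}$ term gives the stated geometric contribution $\tfrac{\theta(t_2-t_1)}{a}\ringg{\rho}(\eta(s))$ (the prefactor here is $\tfrac{\theta}{na+\theta}$ before one re-scales, but one may equally keep $\tfrac{\theta}{a}$ as an upper bound since $\tfrac{\theta}{na+\theta}\le\tfrac{\theta}{a}$). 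Exponentiating $f=\log u$ and taking the infimum over $\gamma\in\Gamma(x_1,x_2)$ gives the claimed inequality.

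The only genuinely new bookkeeping relative to Proposition~\ref{harn} is the algebraic manipulation splitting off $\tfrac{\theta}{n}\Delta f\,g_{ij}$ and re-absorbing it into the time derivative; this is the step where the improved coefficient $na-nb-nc+\theta$ (versus the bare $a-b-c$ of Proposition~\ref{harn}) arises, and it is what justifies calling this a "variant." I expect no real obstacle: the maximum--principle input is already packaged in Theorem~\ref{main}, the sign conditions are guaranteed by membership in $\mathcal{C}'$, and everything else is the same Li~\&~Yau path integral. The one point to be careful about is ensuring that after the rearrangement the coefficient multiplying $-f_t\,g_{ij}$ is a fixed positive constant (namely $1+\tfrac{\theta}{na}>0$), so that dividing by it is legitimate and merely rescales the constants $\varepsilon$ and the weights in $\psi$.
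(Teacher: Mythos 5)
Your overall strategy is the one the paper intends (it only says ``with the same strategy'' as Proposition~\ref{harn}): split $\theta f_{ij}=\theta\ringg{f}_{ij}+\tfrac{\theta}{n}\Delta f\,g_{ij}$, eliminate $\Delta f$ via the equation, and run the Li--Yau path integral. But the bookkeeping does not deliver the constants in the statement, and the place where you claim it does is a genuine gap. Carried out correctly, the substitution $\Delta f=f_t-|\nabla f|^2-e^{(p-1)f}$ puts the coefficient $a+\tfrac{\theta}{n}$ in front of $-f_t\,g_{ij}$; dividing by it (which is the only legitimate way to isolate $-f_t$, since $f_t$ has no sign) gives
\begin{equation*}
-f_t\,g_{ij}\le \frac{n\theta}{na+\theta}\,\ringg{f}_{ij}+\frac{nb}{na+\theta}\,f_if_j-\frac{na-nc+\theta}{na+\theta}\,|\nabla f|^2 g_{ij}-\frac{\theta}{na+\theta}\,e^{(p-1)f}g_{ij}+\frac{n}{(na+\theta)\varepsilon t}\,g_{ij},
\end{equation*}
so three of your intermediate claims are off: the $e^{(p-1)f}$ term carries a \emph{negative} coefficient (you wrote it with a plus sign and then ``dropped it because $e^{(p-1)f}>0$'', which would be an invalid step as written --- discarding it is legitimate only because its true coefficient is negative); the $\ringg{\rho}$ prefactor is $\tfrac{n\theta}{na+\theta}$, not $\tfrac{\theta}{na+\theta}$ (harmless, since $\tfrac{n\theta}{na+\theta}\le\tfrac{\theta}{a}$ still holds); and, crucially, after $f_if_j\le|\nabla f|^2g_{ij}$ the available gradient coefficient is $\tfrac{na-nb-nc+\theta}{na+\theta}$, so Young's inequality absorbs $|\nabla f||\dot\gamma|$ at the price of $\tfrac{(na+\theta)|\dot\gamma|^2}{4(na-nb-nc+\theta)(t_2-t_1)}$, \emph{not} the stated $\tfrac{na|\dot\gamma|^2}{4(na-nb-nc+\theta)(t_2-t_1)}$.

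To obtain the $na$ numerator one would need the gradient coefficient $\tfrac{na-nb-nc+\theta}{na}$, i.e.\ one would have to divide the tensor inequality by $a$ alone; but then the left-hand side is $-\bigl(1+\tfrac{\theta}{na}\bigr)f_t\,g_{ij}$, and since $f_t$ has no definite sign this cannot be replaced by $-f_t\,g_{ij}$. Your closing remark that dividing by $1+\tfrac{\theta}{na}$ ``merely rescales the constants'' is exactly the point at issue: that rescaling is what turns $na$ into $na+\theta$ in the final kinetic weight. As it stands, your argument proves the proposition with $na+\theta$ in place of $na$ (consistent, incidentally, with Proposition~\ref{harn3}, whose kinetic numerator is $\theta+na$), which is strictly weaker than the statement; either prove that weaker version and flag the discrepancy in the stated constant, or supply an additional argument producing the coefficient $\tfrac{na-nb-nc+\theta}{na}$ in front of $|\nabla f|^2$, which the present derivation does not give.
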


Finally, we also have another one coming from the scalar (trace) version of the Li \& Yau inequality~\eqref{MAIN2}. We underline that it holds up to an exponent $p$ lower than the analogous one for the Harnack inequality obtained in~\cite{cacama2}, moreover, this latter holds also without the hypothesis of parallel Ricci tensor.
  
\begin{prop}\label{harn3}
Let $(M,g)$ be an $n$--dimensional complete Riemannian manifold with nonnegative sectional curvatures as well as parallel Ricci tensor. Consider $(a,b,c,a,\theta) \in \mathcal{C}$ and $1<p<1 + G(a.b,c,a,\theta)$. Let $u:M\times[0,T)\to\R$ a classical positive solution of the equation $u_t = \Delta u + u^p$, then there exists $\varepsilon =\varepsilon(n,p,a,b,c,\theta)$ such that, given any $0<t_1<t_2\leq T$ and $x_1,x_2 \in M$, the following inequality holds
\begin{equation*}
u(x_1,t_1) \leq u(x_2,t_2) \Big(\frac{t_2}{t_1}\Big)^{1/\varepsilon} \exp(\psi(x_1,x_2,t_1,t_2)),
\end{equation*}
where 
\begin{align*}
\psi(x_1,x_2,t_1,t_2)&:= \inf_{\gamma \in \Gamma(x_1,x_2)} \int_{0}^{1} \Big[\frac{(\theta+na) \vert \dot{\gamma}(s)\vert^2}{4(na-nc+\theta-b) (t_2-t_1)}\\&\qquad\qquad\qquad\qquad - \frac{\theta}{\theta+na} u^{p-1}(\gamma(s), (1-s)t_2 + st_1) \Big] \, ds,
\end{align*}
with $\Gamma(x_1,x_2)$ given by all the paths in $M$ parametrized by $[0,1]$ joining $x_2$ to $x_1$. 
\end{prop}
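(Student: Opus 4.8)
The plan is to repeat, almost verbatim, the argument of Proposition~\ref{harn}, only replacing the matrix inequality~\eqref{MAIN} by its trace~\eqref{MAIN2}. Since the quintuple $(a,b,c,a,\theta)\in\mathcal{C}$ has $d=a$, Theorem~\ref{main} applies for $1<p<1+G(a,b,c,a,\theta)$, and tracing~\eqref{MAIN} with $g$ produces a constant $\varepsilon>0$ such that, writing $f=\log u$,
\begin{equation*}
(\theta+na)\Delta f + (b+nc)|\nabla f|^2 + na\,e^{(p-1)f} + \frac{n}{\varepsilon t}\,\geq\,0
\end{equation*}
holds pointwise in $M\times(0,T)$.

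Next I would eliminate $\Delta f$ using the identity $\Delta f = f_t-|\nabla f|^2-e^{(p-1)f}$, which follows from $u$ solving~\eqref{eq:semiheat}. Substituting and dividing by $\theta+na>0$ turns the inequality above into an upper bound for $-f_t$, namely
\begin{equation*}
-f_t\,\leq\,-\frac{na-nc+\theta-b}{\theta+na}\,|\nabla f|^2\,-\,\frac{\theta}{\theta+na}\,e^{(p-1)f}\,+\,\frac{n}{(\theta+na)\,\varepsilon\,t}.
\end{equation*}
The admissibility conditions $a>c>0$ and $\theta>b\geq0$ from Definition~\ref{def:adm} force $na-nc+\theta-b>0$, so the coefficient of $|\nabla f|^2$ has exactly the sign needed below; this plays here the role that $a-b-c>0$ played in Proposition~\ref{harn}, and it is the reason why for this trace version it is enough to take the quintuple in $\mathcal{C}$ rather than $\mathcal{C}'$.

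Then, fixing a path $\gamma\in\Gamma(x_1,x_2)$ and setting $\eta(s)=(\gamma(s),(1-s)t_2+st_1)$ so that $\eta(0)=(x_2,t_2)$ and $\eta(1)=(x_1,t_1)$, I would integrate
\begin{equation*}
f(x_1,t_1)-f(x_2,t_2)=\int_0^1\Big[\langle\nabla f(\eta(s)),\dot\gamma(s)\rangle-(t_2-t_1)\,f_t(\eta(s))\Big]\,ds,
\end{equation*}
insert the bound for $-f_t$, absorb the pair $|\nabla f|\,|\dot\gamma|-(t_2-t_1)\tfrac{na-nc+\theta-b}{\theta+na}|\nabla f|^2$ by Young's inequality into $\tfrac{(\theta+na)\,|\dot\gamma|^2}{4(na-nc+\theta-b)(t_2-t_1)}$, evaluate the time integral $\int_0^1\tfrac{n(t_2-t_1)}{(\theta+na)\varepsilon[(1-s)t_2+st_1]}\,ds=\tfrac{n}{(\theta+na)\varepsilon}\log(t_2/t_1)$, and rewrite $e^{(p-1)f}=u^{p-1}$ in the remaining term. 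Exponentiating, using $u=e^f$, taking the infimum over $\gamma\in\Gamma(x_1,x_2)$, and renaming the constant to absorb the factor $n/(\theta+na)$ then yields the claimed Harnack inequality with the function $\psi$ as in the statement.

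I do not expect any genuine obstacle here: everything is elementary once the matrix estimate of Theorem~\ref{main} is available, and the only point requiring care is the bookkeeping with signs — in particular verifying, from the admissibility inequalities, that the $|\nabla f|^2$ coefficient is positive so that Young's inequality may be applied — exactly as in the proofs of Propositions~\ref{harn} and~\ref{harn2}.
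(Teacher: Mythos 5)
Your proposal is correct and follows essentially the same route as the paper, whose proof of Proposition~\ref{harn3} is literally ``apply the trace inequality~\eqref{MAIN2} and proceed as in Proposition~\ref{harn}''; you simply spell out the substitution $\Delta f = f_t - |\nabla f|^2 - e^{(p-1)f}$, the positivity of $na-nc+\theta-b$ from admissibility, Young's inequality, and the time integral, all of which check out. The only cosmetic point is that the derivation produces the $u^{p-1}$ term with a factor $(t_2-t_1)$ (as in Propositions~\ref{harn} and~\ref{harn2}), which is apparently omitted by a typo in the statement as printed, and the exponent $n/((\theta+na)\varepsilon)$ is absorbed by renaming $\varepsilon$, exactly as the paper does implicitly.
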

\begin{proof}
Let $f=\log u$, by Theorem~\ref{main} we know that there exists $\varepsilon>0$ such that inequality~\eqref{MAIN2} holds, then the proof proceeds as in Proposition~\ref{harn}.
\end{proof}

\bibliographystyle{amsplain}
\bibliography{biblio}
\end{document}